\documentclass[11pt]{article}
\oddsidemargin=0cm \evensidemargin=0cm \topmargin=-2 cm
\textwidth=15.8cm \textheight=24cm  \parskip .3em
\setlength{\oddsidemargin}{0.2in}
 \usepackage[margin=1in]{geometry}
\usepackage{float}
\usepackage{amsmath,amsfonts,mathrsfs,wasysym,times,dsfont}
\usepackage{amsmath,amssymb,amsfonts,amsthm,amscd}
\usepackage{mathrsfs,latexsym,url,graphicx,enumerate}
\usepackage{booktabs}
\usepackage{array,multirow}
\usepackage{xcolor}
\usepackage{bbm}
\usepackage[utf8]{inputenc}
\usepackage{amsthm,amscd,amsfonts,newlfont}
\usepackage{amssymb, upref, color}
\usepackage{epsf, subfigure, verbatim}
\usepackage{latexsym}
\usepackage[colorlinks]{hyperref}
\usepackage{mathrsfs}
\usepackage{multirow}
\usepackage{lipsum,multicol}
\usepackage{setspace}
\usepackage{xcolor}

\usepackage{enumitem}

\vfuzz2pt

\numberwithin{equation}{section}

\theoremstyle{plain}
\newtheorem{exam}{Example}[section]
\newtheorem{theorem}[exam]{Theorem}
\newtheorem{lemma}[exam]{Lemma}
\newtheorem{remark}[exam]{Remark}

\newtheorem{definition}[exam]{Definition}

\begin{document}

\title{Complex dynamics and pattern formation in a diffusive epidemic model with an infection-dependent recovery rate}

\author{Wael El Khateeb\footnotemark[2],\
\ Chanaka Kottegoda\footnotemark[3],\ \ Chunhua Shan\footnotemark[2]\ $^,$\footnotemark[1]\\}
\date{}

\maketitle

\begin{abstract}

A diffusive epidemic model with an infection-dependent recovery rate is formulated in this paper. Multiple constant steady states and spatially homogeneous periodic solutions are first proven by bifurcation analysis of the reaction kinetics. It is shown that the model exhibits diffusion-driven instability, where the infected population acts as an activator and the susceptible population functions as an inhibitor. The faster movement of the susceptible class will induce the spatial and spatiotemporal patterns, which are characterized by $k$-mode Turing instability and $(k_1, k_2)$-mode Turing-Hopf bifurcation. The transient dynamics from a purely temporal oscillatory regime to a spatial periodic pattern are discovered.  The model reveals key transmission dynamics, including asynchronous disease recurrence, spatially patterned waves, and the formation of localized hotspots. The study suggests that spatially targeted strategies are necessary to contain disease waves that vary regionally and cyclically.



\end{abstract}

{\small {\bf Keywords.}  Diffusion-driven instability; Turing-Hopf bifurcation; Spatially patterned wave;  Transient dynamics; Spatiotemporal patterns.}

\footnotetext[2]{Department of Mathematics and Statistics, The University of Toledo, Toledo, OH 43606, USA.}
\footnotetext[3]{Department of Mathematics and Physics,
Marshall University, Huntington, WV 25755, USA.  The research of C. Kottegoda was partially supported by the startup fund from Marshall University. }
\footnotetext[1]{Corresponding author. {\it E-mail address:} chunhua.shan@utoledo.edu (C. Shan).}

\section{Introduction}\label{introduction}

Turing’s pioneering work showing that reaction-diffusion mechanisms can drive biological morphogenesis has stimulated extensive research on spatial pattern formation across biology, physics, chemistry, and ecology \cite{turing1952morphogenesis}.
The first known epidemiological reaction-diffusion model appeared in 1974, when Noble simulated the geographic spread of the bubonic plague across Europe \cite{Noble1974}. Since then, the field has expanded significantly. Later on, Murray and Arcuri introduced biologically realistic models that incorporated spatial and temporal pattern formation in infected disease dynamics \cite{ ArcuriMurray1986, Murray2002}.

Let $N(x, t)=S(x, t)+I(x, t)+R(x, t)$ be the total
population, where $S(x, t)$, $I(x, t)$, and $R(x, t)$ are the numbers of susceptible, infected, and recovered
individuals at position $x\in\Omega$ and at time $t$, respectively, where $\Omega$ is some spatial region. Depending on the manner of diffusion and dispersion, the disease may be restricted to a small region, or it can result in a geographic wave of infections. 
To study how epidemic waves and clusters vary in space and time, we consider a model of a reaction-diffusion system
    \begin{equation}\label{RD_model_original}
    \begin{cases}
        S_t-r_1\Delta S=A-dS-\beta SI, \hskip 3.65cm x\in \Omega, t>0;\\
        I_t- r_2 \Delta I=\beta SI-dI-h(I) I,\hskip 3.2cm x\in \Omega, t>0;\\
        R_t-r_3\Delta R=h(I)I-dR,\hskip 4.05cm x\in \Omega, t>0;\\
        S(x, 0)=S_0(x), I(x, 0)=I_0(x), R(x, 0)=R_0(x),\hskip 0.3cm x\in\Omega;\\
\frac{\partial S}{\partial \nu}=0, \frac{\partial I}{\partial \nu}=0, \frac{\partial R}{\partial \nu}=0, \hskip 4.50cm x\in\partial \Omega, t>0,
    \end{cases}
\end{equation}
where $r_1$, $r_2$ and $r_3$ are the positive diffusion coefficients, $\Omega\subset\mathbb{R}^n$ is open and bounded, and $\nu$ is the unit outward normal vector on the smooth boundary $\partial \Omega$. $\Delta$ is the Laplacian with respect to the spatial variable $x$. The Neumann boundary condition with zero flux simulates the case where a local region implements full isolation or quarantine, with no individuals crossing in or out of a bounded geographic region. $S_0(x)$, $I_0(x)$, and $R_0(x)$ are the initial distributions of the susceptible, infected, and recovered populations.

For epidemiological parameters, $A$ is the recruitment rate of the susceptible population, $d$ is the natural death rate per capita, and $\beta$ is the transmission rate of contacts. The function $h(I)$ is the infection-dependent recovery rate, which takes the form \begin{equation*}\label{mu_formula}
h(I)=\mu_{0}+(\mu_{1}-\mu_{0})b/(I+b),
\end{equation*} where $b$ characterizes the availability of health resources. Parameters $\mu_0$ and $\mu_1$ denote the minimum and maximum per capita recovery rates, respectively. The reader is referred to the work \cite{SZ} for a detailed description of $h(I)$.

Since recovered individuals are assumed to acquire permanent immunity in model \eqref{RD_model_original}, they do not return to the susceptible class. Therefore, the third equation is decoupled from the first two equations of the model \eqref{RD_model_original}. Hence, it suffices to study the following diffusive epidemic model.
\begin{equation}\label{RD model}
    \begin{cases}
        S_t-r_1\Delta S=A-dS-\beta SI, \hskip 2.3cm x\in \Omega, t>0;\\
        I_t- r_2 \Delta I=\beta SI-dI-h(I) I,\hskip 1.8cm x\in \Omega, t>0;\\
        S(x, 0)=S_0(x)\geq 0, I(x, 0)=I_0(x)\geq 0, \hskip 0.3cm x\in\Omega;\\
\frac{\partial S}{\partial \nu}=0, \frac{\partial I}{\partial \nu}=0, \hskip 4.45cm x\in\partial \Omega, t>0.
    \end{cases}
\end{equation}

For reaction-diffusion epidemic models, if the population is assumed to be ideally spatially homogeneous, the corresponding ordinary differential equation (ODE) models are obtained. Since the compartmental models proposed by Kermack and McKendrick in 1927 \cite{Kermack1}, epidemiological ODE models have been extensively formulated and analyzed \cite{AM1991, H1994}. The bilinear, standard, and nonlinear incidence rates are often incorporated into the epidemic models in the last several decades \cite{Alexander2, CS,DD93, HD97,R2011, LLI86,LH87,Huang2019,RW03, THRZ08}, and references therein. Various forms of nonlinear recovery rates are also proposed in epidemic models  \cite{Cui2008, Hu2012, SZ, WR2004} and references therein. Both the nonlinear incidence rates and recovery rates may produce rich dynamics, including multiple equilibria and periodic solutions. 

The study of epidemic diffusive models and dynamics has been conducted by many researchers from different aspects, such as traveling waves, basic reproduction number, free boundary problems, pattern formations, patch models, etc. (See Wang and Wu \cite{WangWu2010}, Liang and Zhao \cite{Liang2007}, Wu and Zou \cite{WuZou2001}, Wang and Wang \cite{WangWang2016}, Chen et al. \cite{Chen2020}, Cui, Lam and Lou \cite{Cui2017}, Wang and Zhao \cite{WangZhao2012}, Lin and Zhu \cite{Lin2017}, Du et al. \cite{Du2024}, Chang et al. \cite{Chang2022}, Sun \cite{Sun2012}, Yi, et al. \cite{YWS2009}, Gao and Ruan \cite{Gao2012}, Xiang et al. \cite{Xiang2024}, Ge et al. \cite{Ge}, Cantrell and Cosner \cite{Cantrell2003}, etc.) and references therein.

If diffusion coefficients of variables differ, specific wavelengths are amplified rather than smoothed out, and spatial heterogeneity follows up \cite{AnJiang2019, ArcuriMurray1986, CM1997, JiangWangCao2019,Murray2002, RovinskyMenzinger1992, SanchezGarduno2019, turing1952morphogenesis, WangWangQi2024,YangSong2016}. Complex patterns can bifurcate out of various bifurcations such as Turing bifurcation, Turing-Hopf bifurcation, double-zero bifurcation, and Turing-Turing bifurcation \cite{AnJiang2019, CJ2020, CJ2022, jiang2020formulation, JiangWangCao2019, LWD, RMP, RovinskyMenzinger1992, SanchezGarduno2019, SR2015,  SJLY, turing1952morphogenesis, WangWangQi2024,YangSong2016, ZW2022}.  Following the framework by Jiang et al. \cite{CJ2022, jiang2020formulation}, we study the spatial, temporal, and spatiotemporal patterns through the $k$-mode Turing bifurcation, $k$-mode Hopf bifurcation, and $(k_1, k_2)$-mode Turing-Hopf bifurcation. We show the existence of multiple constant steady states and spatially homogeneous periodic solutions by bifurcation analysis of the reaction kinetics. For the diffusion-driven instability, we prove that the faster movement of the susceptible class will induce the spatial and spatiotemporal patterns, and the infected population functions as an activator, while the susceptible population serves as an inhibitor.

The paper is organized as follows. In section 2, we analyze the reaction kinetics of model \eqref{RD model} through its constant steady states and spatially periodic solutions, i.e., the equilibrium points and limit cycles of the underlying ODE model.
In Section 3, the spatial and spatiotemporal patterns are studied through the $k$-mode Turing instability, Hopf bifurcation, and $(k_1, k_2)$-mode Turing-Hopf bifurcation. Interesting patterns are found and illustrated. We discuss the transient dynamics between temporal oscillations and spatially periodic patterns and provide their biological interpretation in the last section.


\section{Dynamics of underlying ODE model}

For the reaction-diffusion model \eqref{RD model}, its reaction kinetics (i.e., spatially homogeneous kinetics) are governed by the underlying ODE system
\begin{equation}\label{submodel}
\left\{\begin{aligned}
\frac{dS}{dt}&=A-dS-\beta SI,\\
\frac{dI}{dt}&=\beta SI-dI-h(I)I.
\end{aligned}\right.
\end{equation}
In the following, we will analyze the reaction kinetics of model \eqref{RD model} through its constant steady states and spatially periodic solutions, i.e., the equilibrium points and limit cycles of system \eqref{submodel}.

By Comparison Theorem, one can show that the set $\mathcal{U}=\{(S, I)\in \mathbb{R}^2|S\geq 0, I\geq 0, S+I\leq\frac{A}{d}\}$ is positively invariant, so we restrict system \eqref{submodel} to region $\mathcal{U}$.


\subsection{Constant steady states}

System \eqref{submodel} has a disease-free equilibrium $E_0(\frac{A}{d}, 0)$. By the method of next-generation matrix \cite{DW02}, the basic reproduction number is
$$\mathbb{R}_0=\frac{\beta S}{d+h'(I)I+h(I)}_{\big|(S, I)=(\frac{A}{d}, 0)}=\frac{\beta A}{d(d+\mu_1)}.$$

\begin{theorem} Consider the local and global stability of $E_0$.
\begin{itemize}
    \item[(1)] $E_0$ is locally asymptotically stable if $\mathbb{R}_0<1$ and unstable if $\mathbb{R}_0>1$.

    \item[(2)]  $E_{0}$ is globally
asymptotically stable if $\beta A\leq d(d+\mu_{0})$.
\end{itemize}

\end{theorem}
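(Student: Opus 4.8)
The plan is to handle the two claims separately. For the local stability in part (1), I would compute the Jacobian of the reaction kinetics \eqref{submodel} at $E_0=(\tfrac{A}{d},0)$. Because the $I$-component vanishes, the Jacobian is upper (or lower) triangular, so its eigenvalues can be read off directly: one is $-d<0$, and the other is $\beta\tfrac{A}{d}-d-h(0)=\beta\tfrac{A}{d}-(d+\mu_1)=(d+\mu_1)(\mathbb{R}_0-1)$, using $h(0)=\mu_0+(\mu_1-\mu_0)=\mu_1$. Hence both eigenvalues are negative iff $\mathbb{R}_0<1$, giving local asymptotic stability, while $\mathbb{R}_0>1$ produces a positive eigenvalue and instability. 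This step is routine.

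For the global stability in part (2), I would work on the positively invariant region $\mathcal{U}$ and argue via a Lyapunov-type or comparison argument on the $I$-equation. The key observation is that for $I\ge 0$ one has $h(I)\ge h(\infty)=\mu_0$ actually $h$ is decreasing so $h(I)\le h(0)=\mu_1$ and $h(I)\ge \mu_0$; I want a lower bound on the removal term. From the first equation, $S\le \tfrac{A}{d}$ eventually (indeed on $\mathcal{U}$, $S\le S+I\le \tfrac{A}{d}$). Then
\begin{equation*}
\frac{dI}{dt}=\beta SI-dI-h(I)I\le \Bigl(\beta\frac{A}{d}-d-\mu_0\Bigr)I,
\end{equation*}
since $h(I)\ge \mu_0$. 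Under the hypothesis $\beta A\le d(d+\mu_0)$, the bracket is $\le 0$, so $I(t)\to 0$ (if the bracket is strictly negative, exponentially; if zero, one needs a slightly finer argument, e.g. noting $S<\tfrac{A}{d}$ strictly once $I>0$, or a LaSalle-type conclusion). Once $I(t)\to 0$, the first equation becomes asymptotically $S'=A-dS$, whose solutions converge to $\tfrac{A}{d}$; making this rigorous is a standard asymptotically autonomous / limiting-equation argument (Markus's theorem or Thieme's results), or alternatively one can construct the explicit Lyapunov function $V(S,I)=\tfrac12(S-\tfrac{A}{d})^2+cI$ for suitable $c>0$ on $\mathcal{U}$ and apply LaSalle's invariance principle.

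The main obstacle is the borderline case $\beta A=d(d+\mu_0)$, where the naive differential inequality only yields $I'\le 0$ and boundedness rather than decay. To close this, I would use that equality in $\beta S I \le \beta \tfrac{A}{d} I$ forces $S=\tfrac{A}{d}$, which by the first equation forces $\beta S I=0$, hence $I=0$; so the largest invariant subset of $\{I'=0\}$ inside $\mathcal{U}$ is exactly $\{I=0\}$, and LaSalle's invariance principle then gives $(S(t),I(t))\to E_0$. Apart from this endpoint bookkeeping and the asymptotically-autonomous step for the $S$-dynamics, the argument is elementary; the Lyapunov-LaSalle route is probably the cleanest way to present it uniformly for both the strict and non-strict inequality.
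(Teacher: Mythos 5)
Your proposal is correct and follows essentially the same route as the paper: part (1) via the triangular Jacobian with eigenvalues $-d$ and $(d+\mu_1)(\mathbb{R}_0-1)$, and part (2) via the Lyapunov function $V=I$ with $dI/dt=(\beta S-d-h(I))I\le(\beta A/d-d-\mu_0)I\le 0$ on $\mathcal{U}$ and the LaSalle invariance principle, identifying the largest invariant subset of $\{dI/dt=0\}$ as the $S$-axis, on which $S(t)\to A/d$, and then combining attractivity with local stability (which holds since $\beta A\le d(d+\mu_0)$ forces $\mathbb{R}_0<1$). Your extra care about the borderline case and the asymptotically autonomous step is fine but not needed beyond what the paper's LaSalle argument already covers.
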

\begin{proof}
(1) The Jacobian of system \eqref{submodel} at $E_0$ has two eigenvalues $\lambda_1=-d$ and $\lambda_2=(d+\mu_1)(\mathbb{R}_0-1)$. If $\mathbb{R}_0<1$, then $\lambda_2<0$, and if $\mathbb{R}_0>1$, then $\lambda_2>0$. By the stable manifold theorem and Hartman-Grobman theorem, $E_0$ is locally asymptotically stable if $\mathbb{R}_0<1$ and unstable if $\mathbb{R}_0>1$.

(2) Consider the Lyapunov function $I$. The total derivative of $I$ with respect to system \eqref{submodel} is
$$dI/dt=(\beta S-d-h(I))I\leq (\beta A/d-d-\mu_0)I\leq 0.$$
By Lyapunov-Lasalle invariance principle, any orbits in $\mathcal{U}$ approach the largest positively invariant subset of $\{(S, I)\in \mathcal{U}|dI/dt=0\}$, which is the
$S$-axis, where we have $S(t)\rightarrow \frac{A}{d}$ as
$t\rightarrow \infty$. Thus, all orbits in $\mathcal{U}$ converge to
$E_{0}$. Hence, $E_0$ is attractive. Note that $E_0$ is locally asymptotically stable if $\beta A\leq d(d+\mu_{0})$, so $E_{0}$ is globally asymptotically stable.
\end{proof}

For any endemic equilibrium $E(S, I)$, we have $S=\frac{h(I)+d}{\beta}$ and $I$ is the positive root of
\begin{equation}\label{f}
f(I):=A_2I^2+A_1I+A_0=0,
\end{equation}
where
$A_2=(d+\mu_0)\beta$,  $A_1=d^2+(b\beta+\mu_0)d+(b\mu_1-A)\beta$, and $A_0=bd(d+\mu_1)(1-\mathbb{R}_0).$
Equation \eqref{f} has two possible roots
$I_1=\frac{-A_1-\sqrt{\Delta}}{2A_2}$ and $ I_2=\frac{-A_1+\sqrt{\Delta}}{2A_2},$ where $\Delta$ is the discriminant, and
\begin{eqnarray*}
&& \Delta=(d+\mu_1)^2\beta^2b^2+2A(d+2\mu_0-\mu_1)\beta^2 b-2d(d+\mu_1)(d+\mu_0)\beta b\\
&& \hskip 3.6cm  +A^2\beta^2-2d(d+\mu_0)A\beta+d^2(d+\mu_0)^2.
\end{eqnarray*}
Then system \eqref{submodel} has two possible equilibrium points $E_1(S_1, I_1)$ and $E_2(S_2, I_2)$, where $S_i=\frac{h(I_i)+d}{\beta}$, $i=1, 2$. We now establish the existence theorem for endemic equilibrium points.

\begin{theorem}\label{exists}
For system \eqref{submodel}, the following statements hold.
\begin{itemize}\setlength\itemsep{0.cm}
    \item[$(a)$] If $\mathbb{R}_0>1$, system \eqref{submodel} has a unique endemic equilibrium point $E_2$.

\item[$(b)$] If $\mathbb{R}_0=1$ and $A_1<0$, system \eqref{submodel} has a unique endemic equilibrium point $E_2$.

\item[$(c)$] If $\mathbb{R}_0<1$, $\Delta>0$ and $A_1<0$, system \eqref{submodel}  has two endemic equilibria $E_1$ and $E_2$. These two equilibria coalesce into $E^\ast(S^\ast, I^\ast)$ if $\Delta=0$.

\item[$(d)$] System \eqref{submodel} does not have any endemic equilibrium point for other cases.
\end{itemize}
\end{theorem}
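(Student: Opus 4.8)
The plan is to reduce Theorem~\ref{exists} entirely to counting the roots $I>0$ of the quadratic $f$ in \eqref{f}. First I would establish the bijection: a point $(S,I)$ with $I>0$ is an endemic equilibrium of \eqref{submodel} if and only if the second equation forces $\beta S=d+h(I)$ and the first forces $S=A/(d+\beta I)$; equating these two expressions and clearing the strictly positive factor $I+b$ produces exactly $f(I)=0$, and since $I+b>0$ for all $I\ge 0$ no spurious roots are introduced. I would also note at this stage that any such root gives an admissible equilibrium inside $\mathcal U$: $S=(h(I)+d)/\beta>0$ because $d>0$ and $h\ge 0$, and from $dS=A-\beta SI$ together with $\beta S=d+h(I)$ one gets $S+I=\frac{A}{d}-\frac{Ih(I)}{d}\le\frac{A}{d}$, so that $(S,I)\in\mathcal U$.

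Next I would collect the elementary facts about $f(I)=A_2I^2+A_1I+A_0$. Since $A_2=(d+\mu_0)\beta>0$ the parabola opens upward; by Vieta's formulas the product of its two roots is $A_0/A_2$ and their sum is $-A_1/A_2$; one has $A_1^2-\Delta=4A_2A_0$; and, crucially, $\sign(A_0)=\sign(1-\mathbb R_0)$ because $A_0=bd(d+\mu_1)(1-\mathbb R_0)$ with $b,d,d+\mu_1>0$. This reduces the theorem to a sign analysis in the three quantities $\sign(1-\mathbb R_0)$, $\sign(A_1)$, and $\sign(\Delta)$.

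The four cases then follow directly. For $(a)$, $\mathbb R_0>1$ gives $A_0<0$, so the product of the roots is negative; hence $\Delta>0$, exactly one root is positive (the larger one $I_2=(-A_1+\sqrt\Delta)/(2A_2)$), and $E_2$ is the unique endemic equilibrium. For $(b)$, $\mathbb R_0=1$ gives $A_0=0$, so $f(I)=I(A_2I+A_1)$ and $\Delta=A_1^2$; the nonzero root is $-A_1/A_2$, which coincides with $I_2$ and is positive exactly when $A_1<0$, yielding the unique endemic $E_2$. For $(c)$, $\mathbb R_0<1$ gives $A_0>0$, so the product of the roots is positive; with $A_1<0$ their sum $-A_1/A_2$ is positive, and with $\Delta>0$ they are real and distinct, hence both positive, giving the two endemic equilibria $E_1,E_2$; when $\Delta=0$ they merge into the double root $I^\ast=-A_1/(2A_2)>0$, i.e.\ $E^\ast$. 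For $(d)$ I would check that no remaining configuration admits a root in $(0,\infty)$: if $\mathbb R_0=1$ and $A_1\ge 0$ the only nonzero candidate $-A_1/A_2$ is $\le 0$; if $\mathbb R_0<1$ and $\Delta<0$ there are no real roots; and if $\mathbb R_0<1$, $\Delta\ge 0$, $A_1\ge 0$ the two real roots have positive product and nonpositive sum, hence are both $\le 0$.

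I do not expect a genuine obstacle: the heart of the proof is a quadratic-root count. The points that merit care are verifying that case $(d)$ is exhaustive (every sign pattern not covered by $(a)$--$(c)$ must indeed yield no positive root), correctly identifying which of $I_1,I_2$ is the relevant positive root in each case, and confirming — via the short computation above — that each candidate endemic point actually lies in the positively invariant region $\mathcal U$.
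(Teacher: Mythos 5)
Your proposal is correct and follows essentially the same route as the paper, which reduces endemic equilibria to the positive roots of the quadratic $f(I)=A_2I^2+A_1I+A_0$ in \eqref{f} (with $\sign A_0=\sign(1-\mathbb{R}_0)$) and leaves the root count implicit; your Vieta/sign analysis, the exhaustiveness check in case $(d)$, and the verification that each root yields an admissible equilibrium in $\mathcal{U}$ simply make that argument explicit.
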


We interpret the conditions in Theorem \ref{exists} from a biological point of view. Parameters $b$ and $\beta$ are chosen as bifurcation parameters to explore the impact of health resources and contact transmission rate. The basic reproduction number $\mathbb{R}_{0}=1$ defines a straight line
$C_{0}: \beta=\Phi_0(b)=d(d+\mu_1)/A.$

\begin{figure}[!ht]
\begin{center}
{\includegraphics[angle=0, width=0.35\textwidth]{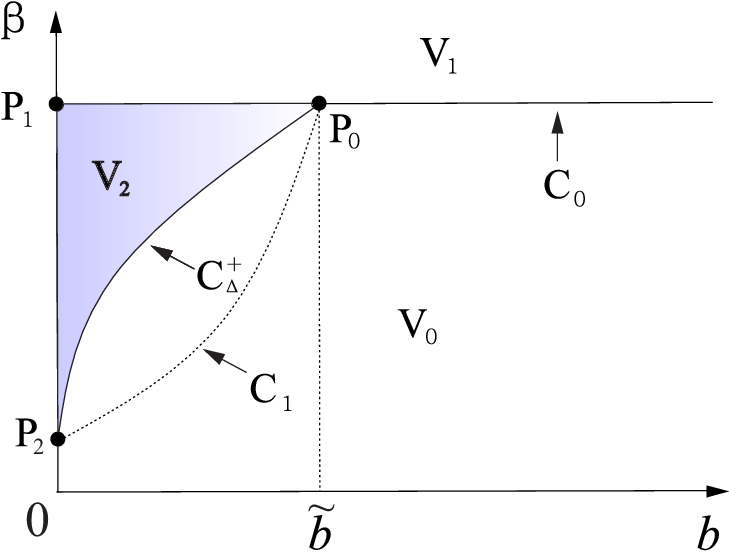}}
 \caption{Bifurcations of endemic equilibrium points in $(b, \beta)$-plane}\label{EEE}
\end{center}\end{figure}

The condition $A_1=0$ defines a hyperbola
\begin{eqnarray*}
C_{1}:
\beta=\Phi_1(b)=\frac{d(d+\mu_0)}{A-b(d+\mu_1)}.
\end{eqnarray*}
The curve $C_0$ intersects $\beta$-axis at $P_1(0, \frac{d(d+\mu_1)}{A})$ and $C_1$ at $P_0(\tilde{b}, \frac{d(d+\mu_1)}{A})$, where $\tilde{b}=\frac{A(\mu_1-\mu_0)}{(d+\mu_1)^2}$. The curve $C_1$ intersects $\beta$-axis at $P_2(0, \frac{d(d+\mu_0)}{A})$. The condition $\Delta=0$ defines a quadratic curve
\begin{eqnarray*}
C_{\Delta}^{\pm}: \beta=\Phi_{\Delta}^{\pm}(b)=\dfrac{d(d+\mu_0)(bd+b\mu_1+A)\pm2(d+\mu_0)d\sqrt{(\mu_1-\mu_0)Ab}}{A^2+2b(2\mu_0+d-\mu_1)A+b^2(d+\mu_1)^2},
\end{eqnarray*}
where $C_{\Delta}^{+}$ intersects $C_0$ at $P_0$ and $\beta$-axis at $P_2$.
It is easy to verify that $\Phi_{\Delta}^{-}(b)\leq \Phi_1(b)\leq \Phi_{\Delta}^{+}(b)$ for $b\geq 0$. By the conditions in Theorem \ref{exists} $(c)$, $\Delta=0$ corresponds to the curve $C_{\Delta}^{+}$. The two curves $C_0$ and $C_{\Delta}^+$ divide the first quadrant of $(b, \beta)$-plane into three subregions
\begin{eqnarray*}
&& V_0=\{(b, \beta)|0<\beta<\Phi_{\Delta}^{+}(b), 0<b<\tilde{b}\}\cup\{(b, \beta)|0<\beta<\Phi_0(b), b\geq\tilde{b}\},\\
&& V_1=\{(b, \beta)|\beta>\Phi_0(b), b>0\},\quad V_2=\{(b, \beta)| \phi_{\Delta}^{+}(b)<\beta<\Phi_0(b), 0<b<\tilde{b}\}.
\end{eqnarray*}
There are $0$, $1$ and $2$ simple endemic equilibria in $V_0$, $V_1$ and $V_2$, respectively. Furthermore,  $E_2$ exists in $V_1$, while $E_1$ and $E_2$ exist in $V_2$. The equilibrium $E_1$ disappears when crossing the line segment $\overline{P_0P_1}$ from $V_2$ to $V_1$,
On the arc $\widehat{P_0P_2}$ of $C_{\Delta}^{+}$, two equilibria $E_1$ and $E_2$ coalesce into an equilibrium $E^{\ast}$. Hence, we have the following theorem.

\begin{theorem}
System \eqref{submodel} undergoes a forward bifurcation if $\beta=\Phi_0(b)$ and $b>\tilde{b}$. System \eqref{submodel} undergoes a backward bifurcation if $\beta=\Phi_0(b)$ and $b<\tilde{b}$. System \eqref{submodel} undergoes a saddle-node bifurcation if $\beta=\Phi_{\Delta}^{+}(b)$ and $0<b<\tilde{b}$.
\end{theorem}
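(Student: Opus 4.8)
The plan is to treat the two transcritical thresholds $\beta=\Phi_0(b)$ and the fold threshold $\beta=\Phi_\Delta^{+}(b)$ separately, in all cases exploiting that the endemic equilibria of \eqref{submodel} are in one‑to‑one correspondence with the positive roots $I$ of the quadratic $f(I;\beta)=A_2I^2+A_1I+A_0$ in \eqref{f} through $S=(d+h(I))/\beta$. A computation I will use throughout is the identity
\[
\det J(E)=\frac{I}{I+b}\,f'(I)
\]
at any endemic equilibrium $E(S,I)$, $J$ denoting the Jacobian of \eqref{submodel}. It follows by substituting $\beta S=d+h(I)$ and $d+\beta I=A/S$ into $J$, which yields $\det J(E)=\tfrac{\beta I}{d+h(I)}\bigl(Ah'(I)+(d+h(I))^2\bigr)$, and then differentiating, at a root of $f$, the relation $(I+b)\bigl(A\beta-d(d+h(I))-\beta(d+h(I))I\bigr)=-f(I)$. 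This identity ties the degeneracy of $J$ to the multiplicity of the associated root of $f$, which is precisely the information a transcritical/saddle–node analysis needs.

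\emph{Transcritical cases $\beta=\Phi_0(b)$.} Here $\mathbb R_0=1$, hence $A_0=0$ and $f(I;\Phi_0)=I\,(A_2I+A_1)$, so the branch of endemic states bifurcating from $E_0=(A/d,0)$ is the one through $(I,\beta)=(0,\Phi_0(b))$. Substituting $\beta=\Phi_0(b)=d(d+\mu_1)/A$ into $A_1$ gives $A_1|_{C_0}=\tfrac{d(d+\mu_1)^2}{A}\,(b-\tilde b)$, so $A_1|_{C_0}\neq0$ when $b\neq\tilde b$ and $\operatorname{sign}A_1|_{C_0}=\operatorname{sign}(b-\tilde b)$. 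Since $f(0;\Phi_0)=0$ and $\partial_If(0;\Phi_0)=A_1|_{C_0}\neq0$, the implicit function theorem produces a unique smooth endemic branch $I=\phi(\beta)$ with $\phi(\Phi_0)=0$ and $\phi'(\Phi_0)=-\partial_\beta A_0(\Phi_0)/A_1|_{C_0}=bA/A_1|_{C_0}$, whose sign is that of $b-\tilde b$. Because $\mathbb R_0$ is increasing in $\beta$ and the nonzero Jacobian eigenvalue at $E_0$ is $\lambda_2=(d+\mu_1)(\mathbb R_0-1)$ with $\partial_\beta\lambda_2|_{\Phi_0}=A/d>0$ (a transversal crossing), the exchange of stability between $E_0$ and the branch $\phi$ at $\beta=\Phi_0(b)$ is transcritical; a one‑dimensional center–manifold reduction (equivalently, Sotomayor's transcritical criterion or the normal‑form formula of Castillo‑Chavez and Song) then shows the bifurcating endemic state lies on the side $\mathbb R_0>1$ when $b>\tilde b$ (forward) and on the side $\mathbb R_0<1$ when $b<\tilde b$ (backward).

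\emph{Saddle–node case $\beta=\Phi_\Delta^{+}(b)$, $0<b<\tilde b$.} On $C_\Delta^{+}$ one has $\Delta=0$, so $f(\cdot;\beta^\ast)$ has a double root $I^\ast=-A_1(\beta^\ast)/(2A_2)$; since $\beta^\ast=\Phi_\Delta^{+}(b)>\Phi_1(b)$ for $0<b<\tilde b$, while $A_1=0$ on $C_1$ and $A_1$ is strictly decreasing in $\beta$ there (its $\beta$‑coefficient $b(d+\mu_1)-A$ is negative), we get $A_1(\beta^\ast)<0$ and hence $I^\ast>0$, i.e. the equilibrium $E^\ast=(S^\ast,I^\ast)$ of Theorem~\ref{exists}(c). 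By the identity above, $f'(I^\ast)=0$ forces $\det J(E^\ast)=0$, so $0$ is an eigenvalue of $J(E^\ast)$ and the remaining eigenvalue is $\operatorname{tr}J(E^\ast)$, which I will argue is nonzero — most cleanly by noting that for $(b,\beta)$ slightly inside $V_2$ the equilibrium $E_1$ is a saddle ($f'(I_1)<0$ gives $\det J(E_1)<0$), whose stable eigenvalue stays bounded away from $0$ as $\beta\downarrow\Phi_\Delta^{+}(b)$ and $I_1\to I^\ast$, so $\operatorname{tr}J(E^\ast)<0$ and a Bogdanov–Takens degeneracy is excluded. Thus $J(E^\ast)$ has a simple zero eigenvalue with right and left null vectors $v$ and $w$; I will then apply Sotomayor's theorem, verifying (i) $w^{T}F_\beta(E^\ast,\beta^\ast)\neq0$, with $F_\beta=\partial_\beta F=(-SI,SI)^{T}$, and (ii) $w^{T}\bigl[D^2F(E^\ast,\beta^\ast)(v,v)\bigr]\neq0$. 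Condition (i) holds automatically: solving $w^{T}J(E^\ast)=0$ gives $w_2/w_1=A/\bigl((d+h(I^\ast))I^\ast\bigr)=A/(A-dS^\ast)>1$, so $w^{T}F_\beta=S^\ast I^\ast(w_2-w_1)\neq0$. Condition (ii) reduces, after using $\beta S^\ast=d+h(I^\ast)$, to the non‑vanishing of a multiple of $f''(I^\ast)=2A_2>0$; geometrically, $S=(d+h(I))/\beta$ is a local diffeomorphism in $I$ near $I^\ast>0$, so the nondegenerate quadratic fold of $\{f(\cdot;\beta)=0\}$ — two endemic equilibria for $\beta>\Phi_\Delta^{+}(b)$ and none below, as in the region analysis, reflecting $\partial_\beta f(I^\ast;\beta^\ast)<0$ — persists in the $(S,I)$ variables. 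Hence a saddle–node bifurcation occurs at $\beta=\Phi_\Delta^{+}(b)$.

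The algebra for $A_1|_{C_0}$, $\phi'(\Phi_0)$ and the $\det J$ identity is routine. The main obstacle is the nondegeneracy bookkeeping in the saddle–node case: explicitly computing $v$ and $w$, collapsing $w^{T}D^2F(v,v)$ to a nonzero multiple of $f''(I^\ast)$, and confirming $\operatorname{tr}J(E^\ast)\neq0$ so that the degeneracy is a genuine codimension–one fold rather than a Bogdanov–Takens point.
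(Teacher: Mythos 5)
Your route is genuinely different from the paper's. The paper gives no normal-form verification at all: it obtains this theorem as an immediate corollary of Theorem \ref{exists} together with the geometry of the curves $C_0$, $C_1$, $C_\Delta^{+}$ in the $(b,\beta)$-plane (the counts $0,1,2$ of endemic equilibria in $V_0,V_1,V_2$, the disappearance of $E_1$ across $\overline{P_0P_1}$, and the coalescence of $E_1,E_2$ into $E^\ast$ on $\widehat{P_0P_2}$), i.e., a branch-counting argument on the quadratic $f$. Your transcritical part (sign of $A_1|_{C_0}=\tfrac{d(d+\mu_1)^2}{A}(b-\tilde b)$, implicit function theorem at $(I,\beta)=(0,\Phi_0)$, transversal crossing of $\lambda_2$) is correct, consistent with the paper's computations, and in fact supplies the transversality/direction details the paper leaves implicit; the identity $\det J=\tfrac{I}{I+b}f'(I)$ you rely on is exactly the assertion proved in Theorem \ref{E1 saddle E2 antisaddle}.

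The gap is in the saddle-node case, specifically the claim that $\operatorname{tr}J(E^\ast)\neq 0$ along the whole arc $\beta=\Phi_\Delta^{+}(b)$, $0<b<\tilde b$. Your argument — that the stable eigenvalue of the saddle $E_1$ ``stays bounded away from $0$'' as $\beta\downarrow\Phi_\Delta^{+}(b)$ — is circular: since $\det J(E_1)\to 0$ at the fold, the surviving eigenvalue tends precisely to $\operatorname{tr}J(E^\ast)$, so the boundedness-away-from-zero assertion is a restatement of what must be proved, not a proof of it. Moreover, the assertion is false in general: the paper's Example 1 exhibits Bogdanov--Takens points $BT^{\pm}$ lying on $C_\Delta^{+}$ (at $b\approx 0.0180$ and $b\approx 0.0593$, both below $\tilde b\approx 0.0661$ for those parameter values), i.e., points of the arc $\widehat{P_0P_2}$ where the trace vanishes together with the determinant and the Sotomayor fold conclusion degenerates to a codimension-two singularity. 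So your Sotomayor verification can only deliver a saddle-node bifurcation away from such points; you must either add $\operatorname{tr}J(E^\ast)\neq 0$ (equivalently $B_1(I^\ast)\neq 0$, checkable via Lemma \ref{root}) as an explicit hypothesis, or retreat to the weaker coalescence/annihilation statement that the paper's counting argument actually establishes. Separately, the reduction of $w^{T}D^2F(v,v)$ to a nonzero multiple of $f''(I^\ast)=2A_2$ is asserted rather than carried out; it is plausible, but as you note it still needs the explicit computation, and it too presupposes the simple-zero-eigenvalue structure that fails at $BT^{\pm}$.
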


\subsection{Spatially homogeneous periodic solution}

The Jacobian of system \eqref{submodel} at any endermic equilibrium $E(S, I)$ is
\begin{equation*}
 \displaystyle J_0(S, I)=\left[\begin{array}{cc}
   -(d+\beta I) & -(h(I)+d)\\
   \beta I    & -h'(I)I
        \end{array}\right].
\end{equation*}
Then the characteristic equation at $E(S, I)$ is
\begin{eqnarray*}\label{characteristic}
P_0(\lambda)=\lambda^2+B_1\lambda+B_0=0,
\end{eqnarray*}
where $B_1(I)=d+\beta I+h'(I)I$ and $B_0(I)=(d+\beta I)h'(I)I+\beta(h(I)+d)I$.

\begin{theorem}\label{E1 saddle E2 antisaddle}
$E_1$ is a saddle point. $E_2$ is an anti-saddle point.
\end{theorem}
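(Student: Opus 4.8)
The plan is to exploit that $B_0(I)$ equals $\det J_0(S,I)$ at an endemic equilibrium, so that $E_i$ is a saddle precisely when $B_0(I_i)<0$ and an anti-saddle (node, focus, or center) precisely when $B_0(I_i)>0$. Thus the whole statement reduces to pinning down the sign of $B_0$ at $I_1$ and at $I_2$, and the route I would take is to tie $B_0$ to the derivative of the quadratic $f$ in \eqref{f}.

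First I would record the two equilibrium relations. At any endemic equilibrium the second equation of \eqref{submodel} gives $\beta S=d+h(I)$, and feeding this into the first gives $\beta A=(d+h(I))(d+\beta I)=:G(I)$. Using $h(I)=(\mu_0 I+\mu_1 b)/(I+b)$ one obtains $(I+b)(d+h(I))=(d+\mu_0)I+(d+\mu_1)b$, hence the factorization
\[
 f(I)=(I+b)\bigl(G(I)-\beta A\bigr),
\]
which is checked against the coefficients $A_2,A_1,A_0$ by expanding. Differentiating and using $G(I_i)=\beta A$ at an endemic equilibrium yields $f'(I_i)=(I_i+b)\,G'(I_i)$. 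On the other hand $G'(I)=h'(I)(d+\beta I)+\beta\bigl(d+h(I)\bigr)$, so comparing with the definition of $B_0$ gives the identity $B_0(I)=I\,G'(I)$ for every $I>0$. Combining the two,
\[
 B_0(I_i)=I_i\,G'(I_i)=\frac{I_i}{I_i+b}\,f'(I_i),
\]
so that $\operatorname{sign}B_0(I_i)=\operatorname{sign}f'(I_i)$ since $I_i>0$.

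To finish I would read off $f'$ at the roots. Because $A_2=(d+\mu_0)\beta>0$, $f$ is strictly convex, and at $I_{1,2}=(-A_1\mp\sqrt{\Delta})/(2A_2)$ one computes $f'(I_1)=-\sqrt{\Delta}$ and $f'(I_2)=+\sqrt{\Delta}$. Whenever both $E_1$ and $E_2$ are present (which by Theorem \ref{exists} forces $\Delta>0$, hence $I_1<I_2$), this gives $B_0(I_1)<0$ and $B_0(I_2)>0$, i.e. $\det J_0(E_1)<0$ and $\det J_0(E_2)>0$; hence $E_1$ is a saddle and $E_2$ an anti-saddle. In the cases of Theorem \ref{exists}$(a),(b)$ only $E_2$ is present, and again $\Delta>0$ with $f'(I_2)=\sqrt{\Delta}>0$, so $E_2$ is an anti-saddle.

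I do not expect a real obstacle: the crux is the pair of algebraic identities $f(I)=(I+b)(G(I)-\beta A)$ and $B_0(I)=I\,G'(I)$, which together show that the sign of the Jacobian determinant along the endemic branch is governed by $f'$; the only bookkeeping point is the degenerate value $\Delta=0$, where $E_1$ and $E_2$ coalesce and $B_0=0$ (the saddle-node of the previous theorem), which lies outside the scope of this statement. A slightly more geometric alternative is to write $\det J_0=-(d+\beta I)(\beta I)\,\bigl(\phi_1'(I)-\phi_2'(I)\bigr)$ in terms of the nullclines $S=\phi_1(I)=A/(d+\beta I)$ and $S=\phi_2(I)=(d+h(I))/\beta$, and to compare their slopes at the intersection points; this again reduces to the sign of $f'$.
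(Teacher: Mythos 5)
Your argument is correct and follows essentially the same route as the paper: both reduce the claim to the identity $B_0(I_i)=\frac{I_i}{I_i+b}\,f'(I_i)$ at the endemic equilibria and then read off the signs $f'(I_1)=-\sqrt{\Delta}<0$ and $f'(I_2)=\sqrt{\Delta}>0$, so that $\det J_0(E_1)<0$ (saddle) and $\det J_0(E_2)>0$ (anti-saddle). The only difference is presentational: you obtain the identity by direct algebra, via $f(I)=(I+b)\bigl(G(I)-\beta A\bigr)$ and $B_0(I)=I\,G'(I)$, whereas the paper derives the same relation through the chain rule and Implicit Function Theorem applied to $F(I)=\tilde f(S(I),I)$, which is the same computation in disguise since $F(I)=-\beta^{-1}\bigl(G(I)-\beta A\bigr)$.
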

\begin{proof}
Firstly, we prove the assertion:  for any positive equilibrium $E(S, I)$, $\displaystyle B_0(I)=I(I+b)^{-1}f'(I)$.

Let $\tilde{f}(S, I)$ and $\tilde{g}(S, I)$ be the right-hand sides of the first and second equations of system \eqref{submodel}, respectively. Solve
$\tilde{g}(S, I)=0$ for $S$ and we obtain $S(I)=\frac{h(I)+d}{\beta}$.
Substituting it into $\tilde{f}$ yields the function
$F(I)=\tilde{f}(S(I), I)).$ By chain rule and Implicit Function Theorem, we have
\begin{eqnarray*}\label{jacobian}
F'(I)=\frac{\partial \tilde{f}}{\partial S}\frac{d S}{d I}+\frac{\partial \tilde{f}}{\partial I}=-\frac{\partial \tilde{f}}{\partial S}\frac{\frac{\partial \tilde{g}}{\partial I}}{\frac{\partial \tilde{g}}{\partial S}}+\frac{\partial \tilde{f}}{\partial I}=-\frac{det(J_0(I))}{\frac{\partial \tilde{g}}{\partial S}}.
\end{eqnarray*}
A straightforward calculation yields
$
F(I)=-\beta^{-1}(I+b)^{-1}f(I)$ and $F'(I)=-\beta^{-1}(I+b)^{-1}f'(I)$.
Then
$$B_0(I)=det(J_0(I))=-\frac{\partial \tilde{g}}{\partial S} F'(I)=-\beta I (-\beta^{-1})(I+b)^{-1}f'(I)=I(I+b)^{-1}f'(I),$$
and the desired assertion is obtained.

Notice that $f'(I_1)<0$ and $f'(I_2)>0$,  and by the assertion above, we have $B_0(I_1)<0$ and $B_0(I_2)>0$. Hence, $E_1$ is a saddle point, while $E_2$ is not a saddle point.
\end{proof}




The stability of $E_2$ depends on the sign of $B_1(I_2)$.  Now we rewrite $B_1(I)=\Psi(I)(I+b)^{-2}$, where $$\Psi(I)=\beta I^3+(2b\beta+d)I^2+(b^2\beta+2bd-b\mu_1+b\mu_0)I+b^2d.$$ The sign of
$\Psi(I)$ is studied in the Lemma \ref{root} below, and the proof is provided in the Appendix.

\begin{lemma}\label{root}  Let $\omega=[8b^2\beta^2+20b\beta d-d^2+\sqrt{d(8b\beta+d)^3}](8b\beta)^{-1}$, and $\bar{\omega}=\mu_1-\mu_0$. 

\begin{itemize} \setlength\itemsep{0.cm}
    \item[$(1)$] $\Psi(I)$ always has two humps.

\item[$(2)$] If $\bar{\omega}<b\beta+2d$, both humps are on the left-side of $I=0$. Hence, $\Psi(I)>0$ for $I>0$.

\item[$(3)$] If $\bar{\omega}=b\beta+2d$, one humps is on the left-side of $I=0$, and the other hump sits on $I=0$. Hence, $\Psi(I)>0$ for $I>0$.

\item[$(4)$] If $\bar{\omega}>b\beta+2d$, two humps are on the different sides of $I=0$.
\vskip -0.2cm
\begin{itemize}\setlength\itemsep{0.cm}
    \item If $\bar{\omega}<\omega$, then $\Psi(I)>0$ for $I>0$.

    \item If $\bar{\omega}=\omega$, there exists $\tilde{I}>0$ such that $\Psi(\tilde{I})=\Psi'(\tilde{I})=0$, and $\varphi(I)>0$ for $I\in(0, \tilde{I})\cup (\tilde{I}, \infty)$.

    \item If $\bar{\omega}>\omega$, there exist  $\check{I}>0$ and $\hat{I}>0$ such that $\Psi(\check{I})=\Psi(\hat{I})=0$, $\Psi'(\check{I})<0$, $\Psi'(\hat{I})>0$, and $\Psi(I)>0$ for $I\in (0, \check{I})\cup(\hat{I}, \infty)$ and $\Psi(I)<0$ for $I\in(\check{I}, \hat{I})$.
\end{itemize}

\end{itemize}

\end{lemma}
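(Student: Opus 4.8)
The plan is to analyze $\Psi$ as a cubic in $I$ with positive leading coefficient $\beta$, reading off its sign on $(0,\infty)$ from the location of its two critical points together with the value of $\Psi$ at its local minimum. First I would record $\Psi(0)=b^2d>0$ and compute $\Psi'(I)=3\beta I^2+2(2b\beta+d)I+b(b\beta+2d-\bar\omega)$; its discriminant simplifies to $4\big[(b\beta-d)^2+3b\beta\,\bar\omega\big]$, which is strictly positive because $b,\beta>0$ and $\bar\omega=\mu_1-\mu_0>0$. Hence $\Psi'$ has two distinct real roots $I_-<I_+$, so $\Psi$ has a local maximum at $I_-$ and a local minimum at $I_+$ --- this is assertion $(1)$.

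Next I would locate $I_\pm$ relative to $0$ via Vieta's formulas: their product equals $\tfrac{b(b\beta+2d-\bar\omega)}{3\beta}$ and their sum is $-\tfrac{2(2b\beta+d)}{3\beta}<0$. Thus $\bar\omega<b\beta+2d$ forces $I_-<I_+<0$ (both humps left of $I=0$), $\bar\omega=b\beta+2d$ forces $I_-<I_+=0$, and $\bar\omega>b\beta+2d$ forces $I_-<0<I_+$. In the first two cases $\Psi'\ge 0$ on $[0,\infty)$, so $\Psi$ is increasing there with $\Psi(I)\ge\Psi(0)=b^2d>0$ for $I\ge 0$; this gives $(2)$ and $(3)$.

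For $(4)$, with $I_-<0<I_+$, we have $\Psi'<0$ on $(0,I_+)$ and $\Psi'>0$ on $(I_+,\infty)$, so $\Psi$ strictly decreases on $[0,I_+]$ and strictly increases on $[I_+,\infty)$; hence $\min_{I\ge 0}\Psi=\Psi(I_+)$ and the sign of $\Psi$ on $(0,\infty)$ is governed entirely by $\sign\Psi(I_+)$: positive throughout if $\Psi(I_+)>0$; a single double zero at $\tilde I=I_+$ if $\Psi(I_+)=0$; and, if $\Psi(I_+)<0$, exactly one zero $\check I\in(0,I_+)$ and one $\hat I\in(I_+,\infty)$ with $\Psi<0$ on $(\check I,\hat I)$, $\Psi'(\check I)<0$ and $\Psi'(\hat I)>0$ (using $\Psi(0)>0$ and $\Psi(+\infty)=+\infty$). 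It then remains to track $\Psi(I_+)$ as a function of $\bar\omega$ on $(b\beta+2d,\infty)$: the envelope identity $\tfrac{d}{d\bar\omega}\Psi\big(I_+(\bar\omega);\bar\omega\big)=\Psi'(I_+)\,I_+'(\bar\omega)+\partial_{\bar\omega}\Psi(I_+)=-bI_+<0$ shows $\Psi(I_+)$ is strictly decreasing; at $\bar\omega=b\beta+2d$ it equals $\Psi(0)=b^2d>0$, and $\Psi(I_+)\le\Psi(I_0;\bar\omega)\to-\infty$ as $\bar\omega\to\infty$ for any fixed $I_0>0$. So there is a unique $\omega>b\beta+2d$ with $\Psi(I_+)=0$ there, and $\Psi(I_+)>0$ for $\bar\omega<\omega$, $\Psi(I_+)<0$ for $\bar\omega>\omega$; together with the sign dichotomy above this is exactly the three sub-cases of $(4)$.

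Finally I would pin down $\omega$ in closed form. At $\bar\omega=\omega$ the cubic $\Psi$ has a double root at $I_+$, so $\Psi(I_+)=\Psi'(I_+)=0$; eliminating the linear coefficient $q:=b(b\beta+2d-\bar\omega)$ between these two equations gives $2\beta I_+^3+(2b\beta+d)I_+^2-b^2d=0$. The substitution $I_+=bu$ turns this into $(u+1)\big(2b\beta u^2+du-d\big)=0$, whose only positive root is $u=\tfrac{-d+\sqrt{d(8b\beta+d)}}{4b\beta}$; substituting back into $\omega=b\beta+2d+\tfrac1b\big(3\beta I_+^2+2(2b\beta+d)I_+\big)$ and using $2b\beta u^2=d(1-u)$ to clear the quadratic term, a short simplification collapses to $\omega=\tfrac{8b^2\beta^2+20b\beta d-d^2+\sqrt{d(8b\beta+d)^3}}{8b\beta}$, as stated. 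I expect this last step to be the main obstacle --- not conceptually, but because the algebra only collapses neatly after the $I_+=bu$ factorization; attacking it head-on via the full cubic discriminant of $\Psi$ (a messy polynomial in $\bar\omega$) is where one could get bogged down.
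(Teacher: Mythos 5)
Your proposal is correct, and parts (1)--(3) run exactly as in the paper (positivity of the discriminant of $\Psi'$, then the sign of the constant term of $\Psi'$ --- equivalently Vieta --- together with $\Psi(0)=b^2d>0$). Where you genuinely diverge is part (4): the paper introduces a quantity $\tilde{\Delta}(\bar\omega)$, called the discriminant of the cubic $\Psi$, checks that $\tilde{\Delta}(b\beta+2d)<0$ so that its two roots straddle $b\beta+2d$, declares the larger root to be $\omega$, and reads the zero structure of $\Psi$ off the sign of $\tilde{\Delta}$; the closed form of $\omega$ is never verified there. You instead avoid the cubic discriminant altogether: the envelope identity $\frac{d}{d\bar\omega}\Psi(I_+(\bar\omega);\bar\omega)=-bI_+<0$ shows the minimum value on $[0,\infty)$ decreases strictly from $b^2d>0$ (at $\bar\omega=b\beta+2d$, where $I_+=0$) to $-\infty$, giving a unique threshold, and the elimination $\Psi(I_+)=\Psi'(I_+)=0$ with $I_+=bu$ and the factorization $(u+1)(2b\beta u^2+du-d)$ pins the threshold down as the stated $\omega$ (your simplification is right: $\omega=b\beta+\tfrac{7d}{2}+\tfrac{8b\beta+d}{2}u$ collapses to $\frac{8b^2\beta^2+20b\beta d-d^2+\sqrt{d(8b\beta+d)^3}}{8b\beta}$, and this agrees with the larger root of the paper's $\tilde{\Delta}$ because $(d^2-20b\beta d-8b^2\beta^2)^2-64b\beta(b\beta-d)^3=d(8b\beta+d)^3$). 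What each approach buys: the paper's is shorter once the discriminant expression is accepted, but as written $\tilde{\Delta}$ is quadratic in $\bar\omega$ while the genuine discriminant of $\Psi$ is cubic in $\bar\omega$ (it vanishes at $\bar\omega=0$, where the double root sits at $I=-b$), so a reduction is being used silently; your monotonicity-plus-elimination route sidesteps that issue, makes the uniqueness of the threshold explicit, and actually proves the closed-form expression for $\omega$ rather than asserting it.
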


\begin{theorem}\label{Tr(J)<0}
$E_2$ is unstable if  $\bar{\omega}>\omega$ and $I_2\in (\check{I}, \hat{I})$. $E_2$ is locally asymptotically stable if either of the following statements holds.

(a) $\bar{\omega}<\omega$; (b) $\bar{\omega}=\omega$ and $I_2\neq \tilde{I}$; (c) $\bar{\omega}>\omega$ and $I_2\in (0, \check{I})\cup(\hat{I}, \infty)$.
\end{theorem}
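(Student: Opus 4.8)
The plan is to determine the local stability of $E_2$ directly from its characteristic polynomial $P_0(\lambda)=\lambda^2+B_1(I_2)\lambda+B_0(I_2)$ via the Routh--Hurwitz test for a quadratic, and to read off the sign of the trace term $B_1(I_2)$ from Lemma \ref{root}. Two preliminary observations drive everything. First, by Theorem \ref{E1 saddle E2 antisaddle} together with the identity $B_0(I)=I(I+b)^{-1}f'(I)$ proved there, $E_2$ is an anti-saddle, so $B_0(I_2)=\det J_0(I_2)>0$ (using that $E_2$ is a simple equilibrium, i.e.\ $f'(I_2)>0$). Second, since $B_1(I)=\Psi(I)(I+b)^{-2}$ with $(I+b)^{-2}>0$ for $I\ge 0$, the sign of $B_1(I_2)$ equals the sign of $\Psi(I_2)$.

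Next I would invoke the elementary fact that for $\lambda^2+B_1\lambda+B_0$ with $B_0>0$ the product of the roots is $B_0>0$ and their sum is $-B_1$; hence both roots have real part of the same nonzero sign, and that sign is negative precisely when $B_1>0$ and positive precisely when $B_1<0$. Combined with the Hartman--Grobman theorem, $E_2$ is locally asymptotically stable when $\Psi(I_2)>0$ and unstable (an unstable node or focus, depending on the sign of $B_1^2-4B_0$) when $\Psi(I_2)<0$.

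It then remains to translate the hypotheses into signs of $\Psi(I_2)$ using Lemma \ref{root}. In the regimes $\bar\omega\le b\beta+2d$ (parts (2)--(3)) and $b\beta+2d<\bar\omega<\omega$ (part (4), first bullet) the lemma gives $\Psi(I)>0$ for all $I>0$; since one checks that $b\beta+2d<\omega$ always holds --- indeed $8b\beta\,(\omega-(b\beta+2d))=4b\beta d-d^2+\sqrt{d(8b\beta+d)^3}>0$ because $(8b\beta+d)^3>d(d-4b\beta)^2$ --- these situations are all subsumed by hypothesis (a), yielding $\Psi(I_2)>0$. Under (b), part (4) gives $\Psi(I)>0$ for $I>0$, $I\ne\tilde I$, so $I_2\ne\tilde I$ forces $\Psi(I_2)>0$; under (c) and under the instability hypothesis, part (4) gives $\Psi>0$ on $(0,\check I)\cup(\hat I,\infty)$ and $\Psi<0$ on $(\check I,\hat I)$, so $\Psi(I_2)>0$ in case (c) while $\Psi(I_2)<0$ when $I_2\in(\check I,\hat I)$. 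Feeding these signs into the previous step completes the proof. The argument is essentially bookkeeping once Lemma \ref{root} is granted; the only genuinely new (but routine) computation is the inequality $b\beta+2d<\omega$ needed to absorb parts (2)--(3) of the lemma into hypothesis (a), and the only point requiring care is the tacit assumption that $E_2$ is a simple equilibrium (so $\Delta>0$ or $\mathbb{R}_0\ge 1$), which is what guarantees the strict inequality $B_0(I_2)>0$ and rules out a borderline non-hyperbolic equilibrium.
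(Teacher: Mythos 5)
Your proposal is correct and follows essentially the same route as the paper, whose proof is simply the one-line remark that the theorem is a direct consequence of Lemma \ref{root}: stability is read off from $B_0(I_2)>0$ (the anti-saddle property from Theorem \ref{E1 saddle E2 antisaddle}) together with the sign of $B_1(I_2)=\Psi(I_2)(I_2+b)^{-2}$ as catalogued in the lemma. The details you add --- the Routh--Hurwitz bookkeeping, the inequality $b\beta+2d<\omega$ needed to fold cases (2)--(3) of the lemma into hypothesis (a) (which the paper establishes inside the appendix proof of Lemma \ref{root}), and the simplicity of $E_2$ guaranteeing $f'(I_2)>0$ --- are exactly the steps the paper leaves implicit.
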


\begin{proof}
This is the direct consequence of Lemma \ref{root}.
\end{proof}

\begin{theorem}
Let $\mathbb{R}_0>1$. If $E_2$ is unstable, then system \eqref{submodel} has a stable periodic solution. If $E_2$ is stable, then system \eqref{submodel} either does not have a periodic solution, or has at least one periodic solution.
\end{theorem}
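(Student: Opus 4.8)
The plan is to run the Poincar\'e--Bendixson machinery on the compact positively invariant region $\mathcal{U}$, after first pinning down the phase portrait near its boundary. Since $\mathbb{R}_0>1$, Theorem \ref{exists}$(a)$ gives that $E_2$ is the unique endemic equilibrium, while the Jacobian at $E_0=(A/d,0)$ has eigenvalues $-d<0$ and $(d+\mu_1)(\mathbb{R}_0-1)>0$, so $E_0$ is a hyperbolic saddle. I would first record three structural facts: (i) the $S$-axis $\{I=0\}$ is invariant and the flow on it converges monotonically to $E_0$, so $W^s(E_0)=\{I=0\}$ while the interior branch of $W^u(E_0)$ enters $\{I>0\}$; (ii) on $\{S=0\}$ one has $\dot S=A>0$ and on $\{S+I=A/d,\ I>0\}$ one has $\dot S+\dot I=-h(I)I<0$, so the flow is strictly inward on those faces; (iii) $\{I>0\}$ is forward invariant and contains no equilibrium other than $E_2$, and the origin is not an equilibrium.

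Next I would exclude the two non-periodic alternatives in the Poincar\'e--Bendixson trichotomy for any orbit that meets $\{I>0\}$. Its $\omega$-limit set cannot be an equilibrium: not $E_0$, because convergence to a saddle forces the orbit eventually into $W^s(E_0)=\{I=0\}$, violating forward invariance of $\{I>0\}$; and not $E_2$, because in the first case $E_2$ is unstable and, being an anti-saddle by Theorem \ref{E1 saddle E2 antisaddle}, is a repeller that attracts no nonconstant orbit. It cannot be a polycycle either: the only equilibria are $E_0$ and $E_2$; $E_2$ (a node or focus) carries no homoclinic loop, and $E_0$ carries no homoclinic loop inside $\{I>0\}$ and no heteroclinic connection to or from $E_2$, again by the $W^s(E_0)=\{I=0\}$ obstruction. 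Hence the $\omega$-limit set is a periodic orbit, which already proves the existence statement in the first case. Moreover every periodic orbit lies in $\{I>0\}$ (it cannot cross the invariant $S$-axis) and, by the index theorem, encircles $E_2$, so any two periodic orbits are nested.

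To upgrade ``a periodic orbit exists'' to ``a stable periodic orbit exists'', I would analyze the family $\mathcal{F}$ of periodic orbits: it is totally ordered by inclusion of the enclosed Jordan domains and is bounded away from $E_2$ (a small neighborhood of the repeller $E_2$ meets no periodic orbit). The closure of the union of these domains is bounded by a periodic orbit $\Gamma^{\mathrm{out}}$ (the outermost) and their intersection by a periodic orbit $\Gamma^{\mathrm{in}}$ (the innermost); that these boundary curves are genuine periodic orbits, not polycycles, uses the exclusion just established. Any orbit started outside $\Gamma^{\mathrm{out}}$ in $\mathcal{U}$ has a periodic $\omega$-limit set that can be neither outside $\Gamma^{\mathrm{out}}$ (none exists) nor inside it (the orbit cannot cross $\Gamma^{\mathrm{out}}$), hence equals $\Gamma^{\mathrm{out}}$; so $\Gamma^{\mathrm{out}}$ is orbitally attracting from outside, and symmetrically $\Gamma^{\mathrm{in}}$ is orbitally attracting from inside because $E_2$ is a repeller. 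If $\Gamma^{\mathrm{in}}=\Gamma^{\mathrm{out}}$, this orbit is orbitally asymptotically stable and we are done; otherwise one runs the classical nested limit cycle argument on the closed annulus between $\Gamma^{\mathrm{in}}$ and $\Gamma^{\mathrm{out}}$, repeatedly replacing any periodic orbit that fails to attract on one side by the $\omega$-limit of an orbit escaping on that side, to produce a periodic orbit attracting from both sides. I expect this last step to be the main obstacle: it is exactly where one must invoke, or carefully reprove, the structure theory of nested limit cycles in the plane rather than a bare appeal to Poincar\'e--Bendixson.

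For the case $E_2$ stable, I would show that a periodic orbit, if present, cannot be solitary and one-sided. Because $E_2$ is asymptotically stable, orbits starting near $E_2$ converge to $E_2$, so the region just inside the innermost periodic orbit $\Gamma^{\mathrm{in}}$ is not attracted to $\Gamma^{\mathrm{in}}$; hence $\Gamma^{\mathrm{in}}$ is not attracting from inside, while $\Gamma^{\mathrm{out}}$ still attracts from outside by the argument above. Therefore either $\Gamma^{\mathrm{in}}\neq\Gamma^{\mathrm{out}}$, giving two distinct periodic orbits, or they coincide in a single semistable orbit, a periodic solution of even multiplicity; in either reading the system has a periodic solution, and in fact at least two counted with multiplicity.
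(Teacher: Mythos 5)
Your proposal is correct and follows essentially the same route as the paper: the paper's proof is a two-line appeal to the Poincar\'e--Bendixson theorem using exactly the facts you establish, namely that for $\mathbb{R}_0>1$ the equilibrium $E_2$ is the unique equilibrium in the attracting invariant region and the stable manifold of the saddle $E_0$ is the $S$-axis, so no orbit with $I>0$ can limit onto an equilibrium or a polycycle. Your additional nested-limit-cycle analysis (outermost/innermost cycles, one-sided stability, extraction of an orbitally stable cycle, with analyticity ruling out pathological accumulation) is simply the careful completion of the step the paper compresses into ``the desired results follow from the Poincar\'e--Bendixson Theorem,'' and your reading of the second case is at least as strong as what the statement requires.
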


\begin{proof}
If $\mathbb{R}_0>1$,  $E_2$ is the unique equilibrium in the attracting invariant set $U$, and the stable manifold of $E_0$ is the $S$-axis. The desired results follow from the Poincar\'e-Bendixson Theorem.
\end{proof}




In the following, we study the periodic solutions bifurcating from a Hopf bifurcation around $E_2$. Let $\tau=\displaystyle\int_0^t(I(s)+b)^{-1}ds$. Then system \eqref{submodel} is $C^\infty$-equivalent to the cubic polynomial system
\begin{equation*}\label{submodel2}
\left\{\begin{aligned}
\frac{dS}{d\tau}&=(A-dS-\beta SI)(I+b),\\
\frac{dI}{d\tau}&=(\beta S-d)(I+b)I-(\mu_0I+\mu_1b)I.
\end{aligned}\right.
\end{equation*}
Let $x=S-S_2, y=I-I_2$. Then
\begin{equation*}\label{submodel3}
\left[\begin{array}{rcl}
x'(\tau)\\
y'(\tau)
\end{array}\right]=\left[\begin{array}{cc}
\bar{j}_{11} &\bar{j}_{12}\\
\bar{j}_{21} &\bar{j}_{22}
\end{array}\right]\left[\begin{array}{rcl}
x\\
y
\end{array}\right]+\left[\begin{array}{l}
\bar{a}_{11}xy+\bar{a}_{02}y^2+\bar{a}_{12}xy^2\\
\bar{b}_{11}xy+\bar{b}_{02}y^2+\bar{b}_{12}xy^2
\end{array}\right],
\end{equation*}
where
\begin{equation*}
\left.\begin{array}{ll}
\bar{j}_{11}=  -(d+\beta I_2)(I_2+b), &  \bar{j}_{12}=-(h(I_2)+d)(I_2+b),\\
\bar{j}_{21}= \beta I_2(I_2+b), &  \bar{j}_{22}=-h'(I_2)I_2(I_2+b),
\end{array}\right.
\end{equation*}
and
\begin{equation*}
\left.\begin{array}{lll}
\bar{a}_{11}=-(b\beta+2\beta I_2+d), &  \bar{a}_{02}=-(h(I_2)+d), & \bar{a}_{12}=-\beta, \\
\bar{b}_{11}=b\beta+2\beta I_2, &  \bar{b}_{02}=\beta S_1-d-\mu_0, & \bar{b}_{12}=\beta.
\end{array}\right.
\end{equation*}

According to the formula by Andronov et al. \cite{Andronov}, the first Lyapunov number is
\begin{equation}\label{ll1}
\left.\begin{aligned}
L_1= & \dfrac{-3\pi[\bar{j}_{11}\bar{j}_{21}(\bar{a}^2_{11}+\bar{a}_{11}\bar{b}_{02}+\bar{a}_{02}\bar{b}_{11}-2\bar{b}^2_{02})+\bar{j}_{11}\bar{j}_{12}(\bar{b}^2_{11}+\bar{a}_{11}\bar{b}_{02})+\bar{j}_{12}\bar{j}_{21}\bar{b}_{11}\bar{b}_{02}]}{2\bar{j}_{12}(\bar{j}_{11}\bar{j}_{22}-\bar{j}_{12}\bar{j}_{21})^{\frac{3}{2}}}\\
      &-
\dfrac{3\pi[\bar{j}^2_{21}(\bar{a}_{11}\bar{a}_{02}+2\bar{a}_{02}\bar{b}_{02})-2\bar{j}^2_{11}\bar{b}_{11}\bar{b}_{02}-(\bar{j}^2_{11}+\bar{j}_{12}\bar{j}_{12})(2\bar{j}_{11}\bar{b}_{12}+\bar{j}_{21}\bar{a}_{21})]}{2\bar{j}_{12}(\bar{j}_{11}\bar{j}_{22}-\bar{j}_{12}\bar{j}_{21})^{\frac{3}{2}}}.\\
\end{aligned}\right.
\end{equation}



\begin{theorem}\label{Hopf} Let $B_1(I_2)=0$ (i.e., $I_2=\check{I} \text{or}\ \hat{I}$) and $B_1'(I_2)\neq 0$.
System \eqref{submodel} undergoes a supercritical Hopf bifurcation at $E_2$ if $L_1<0$, and undergoes a subcritical Hopf bifurcation at $E_2$ if $L_1>0$.
\end{theorem}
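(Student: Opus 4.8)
The plan is to verify that Theorem \ref{Hopf} is essentially a packaging of the standard Hopf bifurcation theorem (Andronov--Hopf) for planar systems together with the first Lyapunov number formula of Andronov et al. \cite{Andronov}, applied to the system already reduced in the excerpt. First I would observe that under the hypothesis $B_1(I_2)=0$, by Theorem \ref{E1 saddle E2 antisaddle} we have $B_0(I_2) = I_2(I_2+b)^{-1}f'(I_2) > 0$, so the characteristic polynomial $P_0(\lambda) = \lambda^2 + B_1 \lambda + B_0$ at $E_2$ has a pair of purely imaginary eigenvalues $\pm i\sqrt{B_0(I_2)}$. The time rescaling $\tau = \int_0^t (I(s)+b)^{-1}\,ds$ is a $C^\infty$-equivalence (it is a positive, smooth reparametrization of time on the invariant region $\mathcal{U}$, where $I+b \geq b > 0$), so it preserves the phase portrait and in particular the existence, stability, and sub/supercriticality of any bifurcating limit cycle; hence it suffices to analyze the polynomial system in $(x,y)$ coordinates centered at $E_2$.

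Next I would invoke the transversality condition. Choosing a bifurcation parameter (the natural choice is one of $\beta$ or $b$, moving the pair $(b,\beta)$ across the curve $B_1(I_2)=0$; equivalently one can regard $I_2$ itself, or $\bar\omega - \omega$, as the unfolding parameter), the condition $B_1'(I_2) \neq 0$ guarantees that the real part of the eigenvalue pair crosses zero with nonzero speed as the parameter varies — this is because the real part of the eigenvalues is $-B_1/2$, and by the chain rule its derivative along the parameter is a nonzero multiple of $B_1'(I_2)$ times the (nonzero, by the implicit function theorem applied to $f$) derivative of $I_2$ with respect to the parameter. With the simple eigenvalue crossing and transversality in hand, the Andronov--Hopf theorem applies: a one-parameter family of periodic orbits bifurcates from $E_2$, and the direction of bifurcation together with the stability of the cycle is governed by the sign of the first Lyapunov number $L_1$. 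One then simply quotes the sign convention: $L_1 < 0$ yields a stable limit cycle bifurcating as the equilibrium loses stability (supercritical), while $L_1 > 0$ yields an unstable limit cycle coexisting with the stable equilibrium (subcritical). The formula \eqref{ll1} is exactly the Andronov et al. expression specialized to the quadratic/cubic coefficients $\bar a_{ij}, \bar b_{ij}$ and the linear part $\bar j_{ij}$ listed above, evaluated at criticality $\bar j_{11}\bar j_{22} - \bar j_{12}\bar j_{21} = B_0(I_2)(I_2+b)^2 > 0$ so the denominator is well-defined and real.

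The main obstacle — and really the only non-bookkeeping point — is confirming that the Andronov et al. formula has been transcribed and specialized correctly to this system, i.e., that \eqref{ll1} genuinely equals $\frac{-3\pi}{2 b_{12}(\cdots)^{3/2}}$ times the universal combination of mixed partial derivatives of the two right-hand sides at $E_2$, after the normalization that puts the linear part in the canonical form with diagonal entry $\bar j_{11}$ and off-diagonal $\bar j_{12}, \bar j_{21}$. This is a lengthy but mechanical substitution: one reads off the second- and third-order Taylor coefficients of $(A-dS-\beta SI)(I+b)$ and $(\beta S - d)(I+b)I - (\mu_0 I + \mu_1 b)I$ at $(S_2, I_2)$, matches them to the schematic $\bar a_{11} xy + \bar a_{02} y^2 + \bar a_{12} xy^2$ and $\bar b_{11} xy + \bar b_{02} y^2 + \bar b_{12} xy^2$, and checks there are no $x^2$ or $x^2 y$ or $y^3$ terms (which is why those coefficients are absent — the first equation is linear in $S$ and the second is at most linear in $S$ as well, so $\partial^2/\partial S^2$ vanishes identically for both). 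Everything else is a citation: the $C^\infty$-equivalence under time rescaling, the computation of the eigenvalues, the transversality via $B_1'(I_2)\neq 0$, and the Hopf theorem itself.

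\begin{proof}
Under the assumption $B_1(I_2)=0$, Theorem \ref{E1 saddle E2 antisaddle} gives $B_0(I_2)>0$, so the Jacobian of \eqref{submodel} at $E_2$ has eigenvalues $\pm i\sqrt{B_0(I_2)}$. The substitution $\tau=\int_0^t(I(s)+b)^{-1}ds$ is a smooth positive reparametrization of time on $\mathcal U$, hence a $C^\infty$-equivalence, and the translated system in $(x,y)=(S-S_2,I-I_2)$ is the polynomial system displayed above with the indicated linear and nonlinear coefficients (the absence of $x^2$, $x^2y$, $y^3$ terms follows since both right-hand sides are at most linear in $S$). The condition $B_1'(I_2)\neq 0$ ensures the real part of the eigenvalue pair, which equals $-B_1/2$, crosses zero transversally as the bifurcation parameter varies. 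By the Andronov--Hopf bifurcation theorem, a unique branch of periodic orbits bifurcates from $E_2$, and its direction and stability are determined by the sign of the first Lyapunov number $L_1$ in \eqref{ll1}, which is the Andronov et al. \cite{Andronov} formula specialized to the present coefficients and evaluated at $\bar j_{11}\bar j_{22}-\bar j_{12}\bar j_{21}=B_0(I_2)(I_2+b)^2>0$. If $L_1<0$ the bifurcation is supercritical with a stable limit cycle; if $L_1>0$ it is subcritical with an unstable limit cycle. This completes the proof.
\end{proof}
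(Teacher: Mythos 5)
Your overall route is the same as the paper's: verify the purely imaginary pair via $B_0(I_2)>0$ (Theorem \ref{E1 saddle E2 antisaddle}), obtain transversality from $B_1'(I_2)\neq 0$, and let the sign of the Andronov first Lyapunov number \eqref{ll1} decide super- versus subcriticality, with the time rescaling and the translated cubic system taken from the paper's setup. The one step that would not go through as you wrote it is the transversality argument. You take $\beta$ or $b$ as the unfolding parameter and assert that $\frac{d}{d\eta}\operatorname{Re}\lambda_{1,2}=-\tfrac12\frac{d}{d\eta}B_1$ is a nonzero multiple of $B_1'(I_2)\,\frac{dI_2}{d\eta}$. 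But $B_1(I)=d+\beta I+h'(I)I$ depends on $\beta$ and on $b$ \emph{explicitly}, so for $\eta\in\{\beta,b\}$ the chain rule gives $\frac{d}{d\eta}B_1=B_1'(I_2)\frac{dI_2}{d\eta}+\frac{\partial B_1}{\partial\eta}$ with a nonzero extra term (for instance $\partial B_1/\partial\beta=I_2$); the two terms could cancel, so $B_1'(I_2)\neq 0$ alone does not give the eigenvalue-crossing condition for these parameters. In addition, the nonvanishing of $dI_2/d\eta$ does not follow from the implicit function theorem, which only yields smoothness of $I_2(\eta)$; it must be computed.

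The paper avoids both issues by choosing $A$ as the bifurcation parameter: $A$ does not appear in $B_1(I)$ at all, so
\begin{equation*}
\frac{d\bigl(\operatorname{Re}\lambda_{1,2}(A^\ast)\bigr)}{dA}=-\frac12 B_1'(I_2)\frac{\partial I_2}{\partial A},
\qquad
\frac{\partial I_2}{\partial A}=\frac{\beta(I_2+b)}{\sqrt{\Delta}}>0,
\end{equation*}
the latter coming from $\partial f/\partial A=-\beta(I+b)$ and $f'(I_2)=\sqrt{\Delta}$. Hence transversality is exactly equivalent to the hypothesis $B_1'(I_2)\neq 0$. If you replace your parameter choice by $A$ (or, for your chosen parameter, verify directly that the full derivative of $B_1(I_2(\eta),\eta)$ is nonzero), the remainder of your argument---the $C^\infty$-equivalence of the rescaled system, the positivity of the determinant at criticality, and the sign convention for $L_1$ from Andronov et al.---matches the paper and is correct.
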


\begin{proof}
Consider $A$ as a bifurcation parameter, and assume that $B_1(I_2)=0$ for $A=A^\ast$. Then $J(I_2)$ has a pair of complex conjugate eigenvalues $\lambda_{1,2}=\lambda_{1,2}(A)$ in a small neighborhood of $A^\ast$, which is continuously differentiable with respect to $A$, and $Re(\lambda_{1,2}(A^\ast))=0$ and $Im(\lambda_{1,2}(A^\ast))\neq 0$. Then
$$\dfrac{d(Re(\lambda_{1,2}(A^\ast)))}{dA}=-\frac{1}{2}B'_1(I_2)\frac{\partial I_2}{\partial A}=-\frac{[\beta(I_2+b)]B'_1(I_2)}{2\sqrt{\Delta}}\neq 0.$$
Hence, system \eqref{submodel} undergoes a Hopf bifurcation at $E_2$ for $B_1(I_2)=0$ and $B'_1(I_2)\neq 0$.
\end{proof}



\begin{figure}[!ht]
\begin{center}
{\includegraphics[angle=0, width=0.75\textwidth]{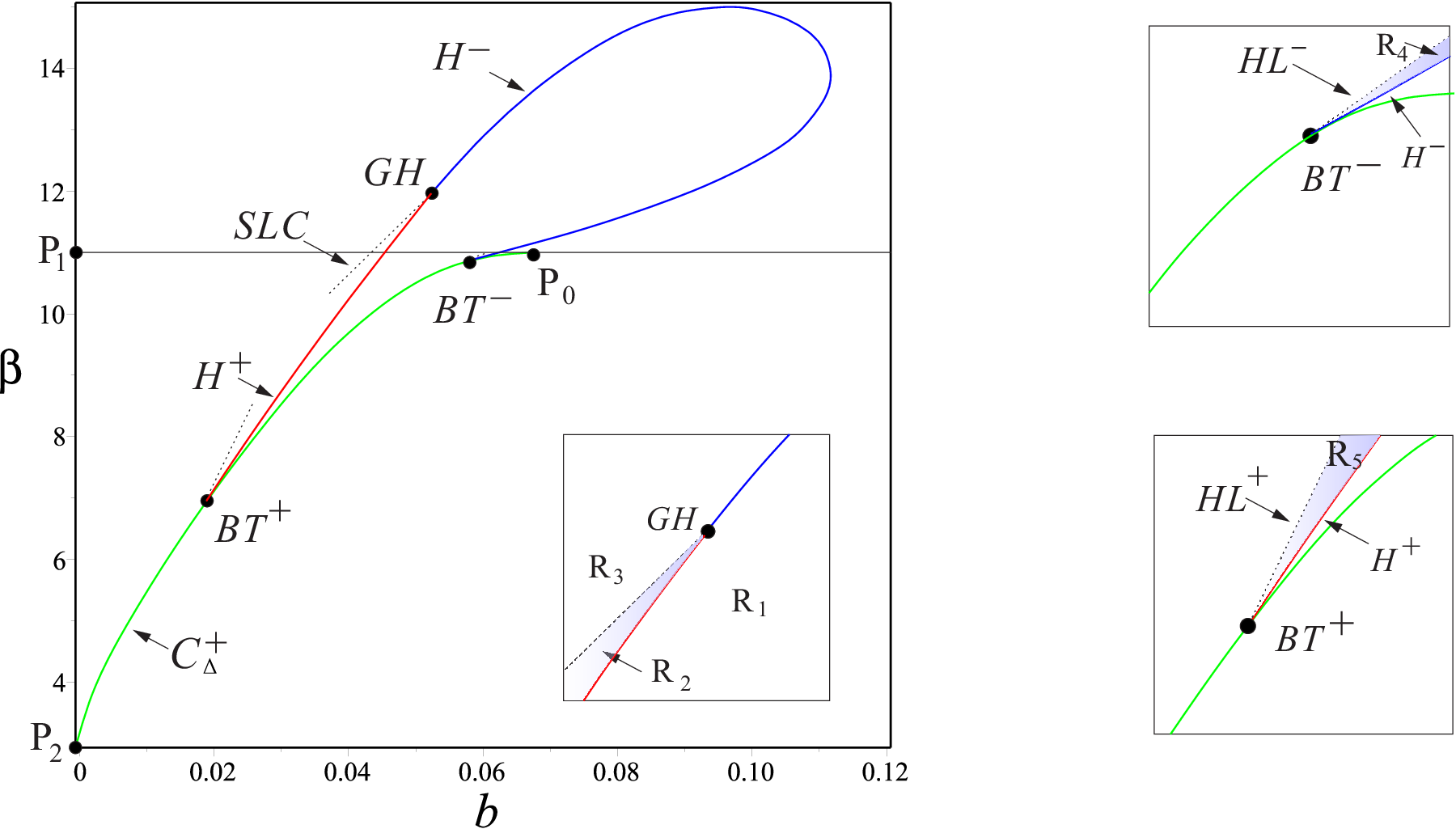}}
 \caption{Bifurcation diagram in $(b, \beta)$-plane. Three small panels are the magnified view in a small neighborhood of $GH$, $BT^-$, and $BT^+$, respectively.} \label{HHH}
\end{center}\end{figure}

\noindent {\bf Example 1}. Let $d=1,\mu_1=10, \mu_0=2, A=1$. We sketch the Hopf bifurcation diagram in the $(b, \beta)$-plane by numerical simulation.

The Hopf bifurcation curve is denoted by
$H=\{(b, \beta)|B_1(I_2)=0\}.$
The curve $H$ intersects the saddle-node bifurcation curve $C_\Delta^+$ at two points $BT^-$ and $BT^+$. Here, $BT^-$ and $BT^+$ denote the supercritical and subcritical Bogdanov-Takens bifurcation points, respectively. In a {\it small neighborhood} of point $BT^-$ there exists a region $R_4$ between the Hopf bifurcation curve $H^-$ and the homoclinic bifurcation curve $HL^-$, and the system \eqref{submodel} has a unique limit cycle in $R_4$, which is stable (see the horizontal panel of Fig. \ref{HHH}). In a {\it small neighborhood} of point $BT^+$, there exists a region $R_5$  between the Hopf bifurcation curve $H^+$ and the homoclinic bifurcation curve $HL^+$, and system \eqref{submodel} has a unique limit cycle in $R_4$, which is unstable (see lower right panel of Fig. \ref{HHH}).

We trace the first Lyapunov number $L_1$ by formula \eqref{ll1} along the curve $H$ and find that there is a point $$GH: (b, \beta)\approx (0.052935, 12.084927)$$ at which $L_1=0$, 
and a generalized Hopf bifurcation of codimension 2 occurs. Hence, $H=H^-\cup\{GH\}\cup H^+$, where $H^-$ is the arc $\widehat{GHBT^-}$ (blue curve), and $H^+$ is the arc $\widehat{GHBT^+}$ (red curve).

\begin{figure}[H]
\begin{center}
\subfigure[$E_2$ is stable and no periodic solution in $R_3$]
{\includegraphics[angle=-90,width=0.47\textwidth]{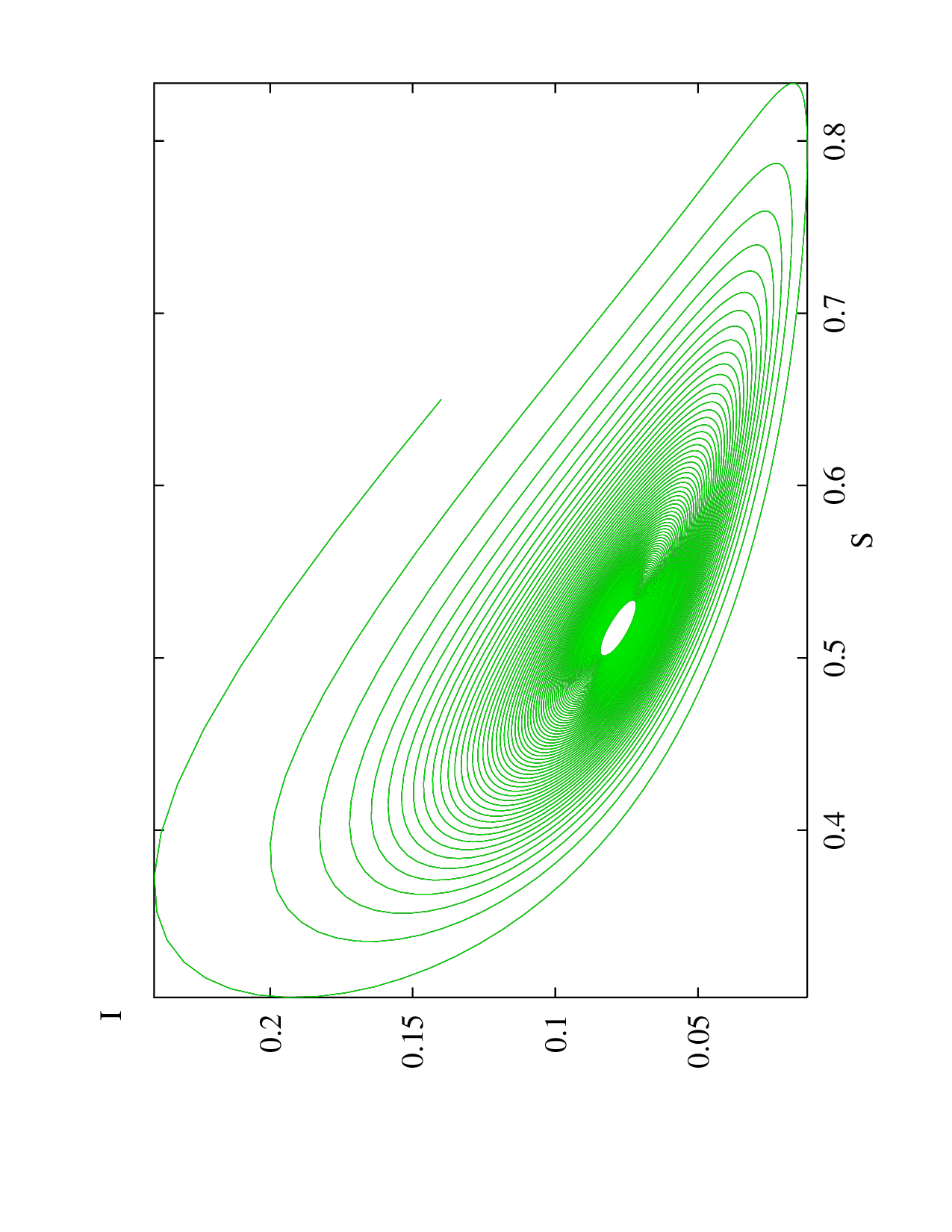}} \ \ \ \
\subfigure[One periodic solution in $R_1$]
{\includegraphics[angle=-90,width=0.47\textwidth]{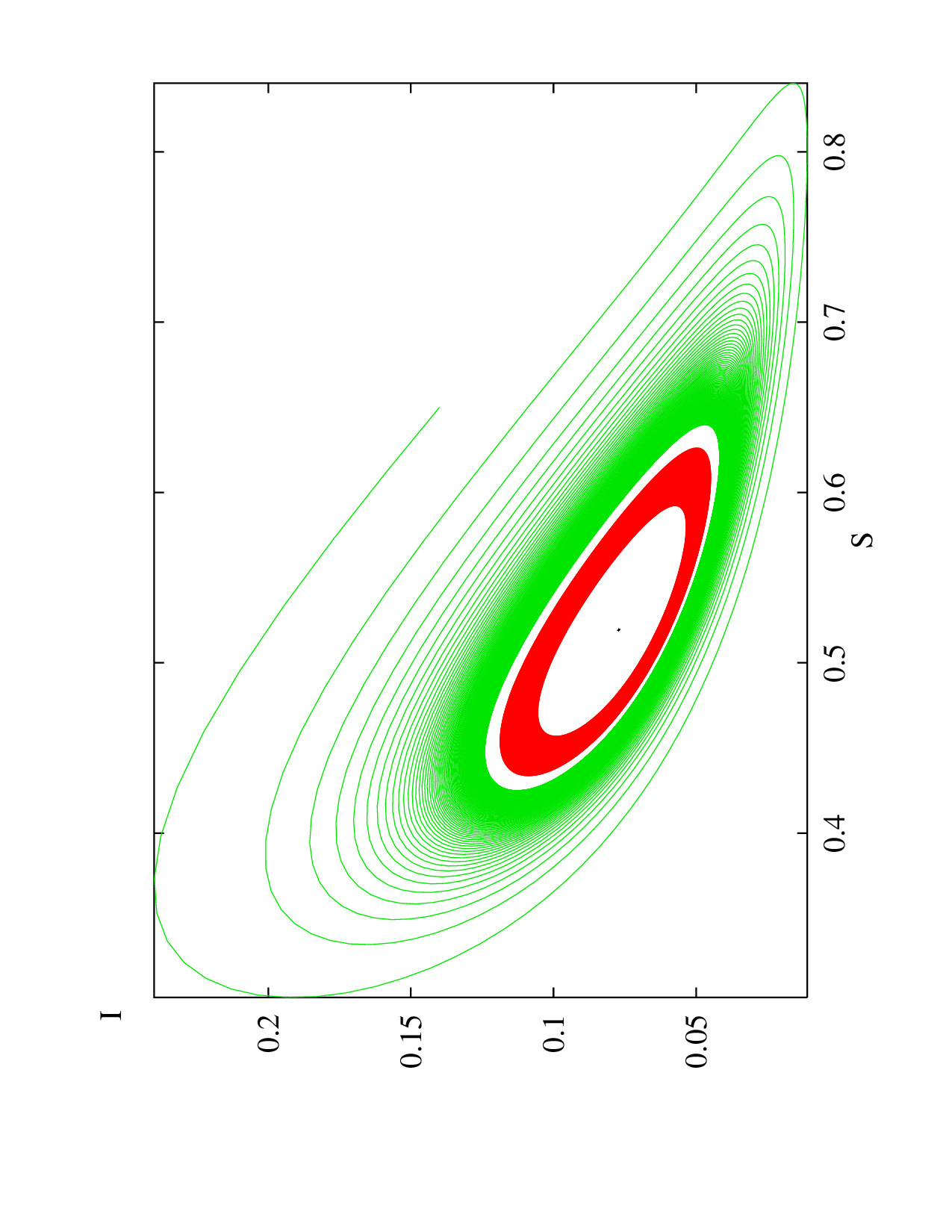}} \ \ \ \
\subfigure[Two periodic solutions in $R_2$]
{\includegraphics[angle=-90,width=0.47\textwidth]{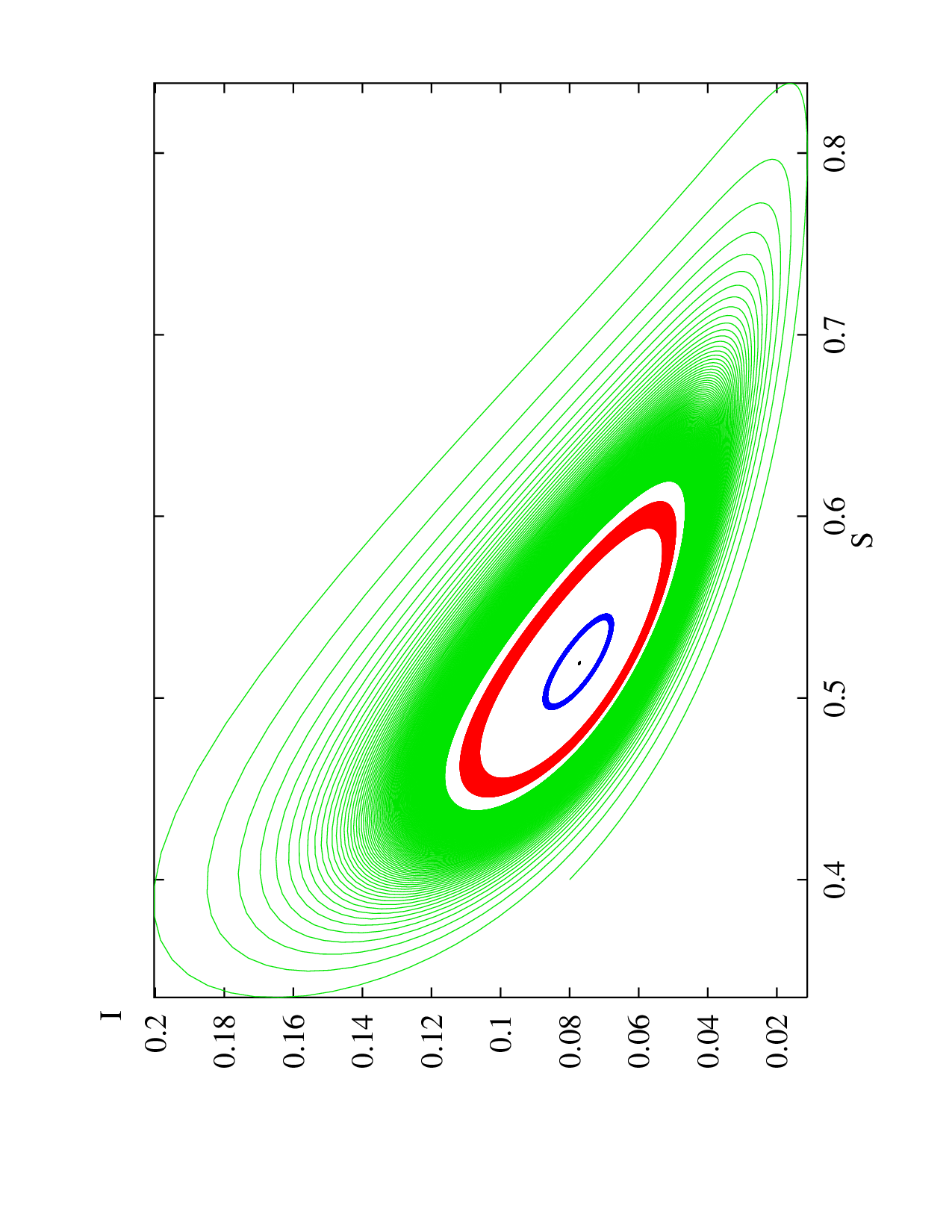}}
\caption{{Phase portraits near the generalized Hopf bifurcation point $GH$. (a) $(b, \beta)\in R_3$. (b) $(b, \beta)\in R_1$. (c) $(b, \beta)\in R_2$.}}\label{HH}
\end{center}
\end{figure}

The Hopf bifurcation curve $H$ and the curve of saddle-node bifurcation of limit cycles $SLC$ divide the {\it small neighborhood} of point $GH$ into three generic regions $R_1$, $R_2$, and $R_3$ (see left panel of Fig. \ref{HHH}). There is no limit cycle in region $R_3$, and $E_2$ is locally asymptotically stable. System \eqref{submodel} undergoes a supercritical Hopf bifurcation when crossing the curve $H^-$, and $E_2$ becomes unstable, and there is a stable limit cycle in region $R_1$. The system \eqref{submodel} undergoes a subcritical Hopf bifurcation when crossing the curve $H^+$, and $E_2$ becomes stable, and there is an unstable limit cycle (which is contained inside the stable limit cycle) in region $R_2$. So there are two limit cycles in $R_2$. These two limit cycles coalesce on the curve $SLC$ and disappear by the saddle-node bifurcation of limit cycles when crossing the curve $SLC$ into region $R_3$.

We sketch the phase portraits in regions $R_1$, $R_2$, and $R_3$ in Fig. \ref{HH}.

$(a)$ $(b, \beta)\in R_3$, where $\beta=12, b=0.0522$. In this case, $E_2$ is a stable equilibrium, and there is no limit cycle. The orbit (green curve) through the point $(0.65, 0.14)$  spirals inward and approaches $E_2$.

$(b)$ $(b, \beta)\in R_1$, where $\beta=12, b=0.052277264$. In this case, $E_2$ is an unstable equilibrium, and there is one stable limit cycle. The orbit (green curve) through the point $(0.65, 0.14)$  spirals inward and approaches the stable limit cycle. The orbit (red curve) through the point $(0.4, 0.08)$  spirals outward and approaches the stable limit cycle.

$(c)$ $(b, \beta)\in R_2$, where $\beta=12, b=0.052264417$. In this case, $E_2$ is a stable equilibrium, and there are two limit cycles, the inner one being unstable and the outer one being stable. The orbit (green curve) through the point $(0.4, 0.08)$  spirals inward and approaches the stable limit cycle. The orbit (red curve) through the point $(0.59, 0.054)$  spirals outward and approaches the stable limit cycle.  The orbit (blue curve) through the point $(0.51, 0.086)$  spirals inward and approaches $E_2$.




\begin{remark}
The limit cycles of the underlying ODE system \eqref{submodel} are spatially homogeneous periodic solutions of the reaction-diffusion model \eqref{RD model}. The Hopf bifurcation of the underlying ODE system \eqref{submodel} is the $0$-mode Hopf bifurcation of the reaction-diffusion model \eqref{RD model}. See section 3.3 for the definition of a $k$-mode Hopf bifurcation.
\end{remark}

\section{Pattern formation of reaction-diffusion model}

As a preliminary study, we consider the reaction-diffusion system \eqref{RD model} with the spatial variable $x\in\Omega$, where $\Omega=[0,\ell\pi]$, and $\ell\in\mathbb{R}^+$. The Neumann boundary condition with zero flux becomes
$$S_x(0,t)=S_x(\ell\pi,t)=I_x(0,t)=I_x(\ell\pi,t)=0.$$
First, we investigate the linearization of the reaction-diffusion system at the constant steady states as follows. 

\subsection{Preliminaries}

Define the real-valued Sobolev space
$$X=\{(u, v)\in H^2(0,\ell\pi)\times H^2(0,\ell\pi)|u_x=v_x=0 \text{ at } x=0,\ell\pi\},$$ where $H^2(0,\ell \pi)$ is the space of functions on $(0,\ell \pi)$
 that have square-integrable derivatives up to second order. We further define $X_{\mathbb{C}}=X\oplus iX$ to be the complexification of $X$, and introduce the linear operator $\mathcal{L}$ on $X_{\mathbb{C}}$ as
 $$\mathcal{L}\begin{bmatrix}
     u\\
     v
 \end{bmatrix}=\begin{bmatrix}
     r_1&0\\
     0&r_2
 \end{bmatrix}\begin{bmatrix}
     u_{xx}\\
     v_{xx}
 \end{bmatrix}+J_0(\bar{S}, \bar{I})\begin{bmatrix}
     u\\
     v
 \end{bmatrix},$$ where $(\bar{S}, \bar{I})$ is an equilibrium point of the underlying ODE system \eqref{submodel}, and $J_0(\bar{S}, \bar{I})$ is the Jacobian of system \eqref{submodel} at $(\bar{S}, \bar{I})$.  Then the linearization of the reaction-diffusion system \eqref{RD model} at  $(\bar{S}, \bar{I})$ is 
 \begin{equation}\label{linear_PDE}
   \begin{bmatrix}
     u_t\\
     v_t
 \end{bmatrix} =\mathcal{L}\begin{bmatrix}
     u\\
     v
 \end{bmatrix}.
 \end{equation}
Let $\mathbb{Z}^+$ be the set of all positive integers. Given the homogeneous Neumann boundary condition, the Fourier series solution of the linear system \eqref{linear_PDE} takes the form
 \begin{equation} \label{eigenfunctions}
\begin{bmatrix}
     u(x, t)\\
     v(x, t)
 \end{bmatrix}=\sum_{k=0}^\infty e^{\lambda t}\cos\frac{k}{\ell}x\begin{bmatrix}
     \phi_k\\
     \psi_k
 \end{bmatrix}, \end{equation}
 where $k\in\mathbb{Z}^+\cup\{0\}$, while $\frac{k}{\ell}$ is known as the wavenumber which
 indicates how rapidly the oscillations occur in space, with higher values corresponding to shorter wavelengths (i.e., finer spatial structures). Refer to \cite{Murray2002} for more details. $\phi_k$ and $\psi_k$ are real constants, and $\lambda=\lambda(k)$ is an eigenvalue of the matrix
  \begin{equation}\label{reduced equation}
     J_k(\bar{S}, \bar{I})=J_0(\bar{S}, \bar{I})-\frac{k^2}{\ell^2}\begin{bmatrix}
     r_1&0\\
     0&r_2
 \end{bmatrix},\ \ k\in\mathbb{Z}^+\cup\{0\}.
 \end{equation}
The characteristic equation of the matrix $J_k(\bar{S}, \bar{I})$ is given by \begin{equation}\label{Characteristic equation for RD}
   P_k(\lambda)=  \lambda^2-\mathcal{T}_k\, \lambda+ \mathcal{D}_k=0,
 \end{equation}
 where $\mathcal{T}_k$ and $\mathcal{D}_k$ are the trace and determinant of $J_k(\bar{S}, \bar{I})$ respectively. It should be noted that $J_k(\bar{S}, \bar{I})=J_0(\bar{S}, \bar{I})$ for $k=0$.
 \begin{theorem}\label{T31}
The constant steady state $E_0(\frac{A}{d}, 0)$
of system \eqref{RD model} is stable if $\mathbb{R}_0<1$ and unstable if $\mathbb{R}_0>1$.
 \end{theorem}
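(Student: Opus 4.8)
The plan is to analyze the spectrum of the linear operator $\mathcal{L}$ at $E_0$ via the family of matrices $J_k(\frac{A}{d}, 0)$, $k \in \mathbb{Z}^+ \cup \{0\}$, since stability of the constant steady state $E_0$ for the reaction-diffusion system \eqref{RD model} is governed by the signs of the real parts of all eigenvalues $\lambda(k)$ across all wavenumbers $\frac{k}{\ell}$. Concretely, $E_0$ is (locally asymptotically) stable precisely when every eigenvalue of every $J_k(\frac{A}{d}, 0)$ has negative real part, and unstable if some $J_k$ has an eigenvalue with positive real part.

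First I would write down $J_0(\frac{A}{d}, 0)$ explicitly from the Jacobian formula: at $E_0$ we have $I = 0$, so the off-diagonal entry $\beta I$ vanishes and the matrix is upper triangular with diagonal entries $-d$ and $\beta \frac{A}{d} - d - h(0) = \beta\frac{A}{d} - d - \mu_1 = (d+\mu_1)(\mathbb{R}_0 - 1)$ — this is exactly the eigenvalue computation already recorded in the proof of the first Theorem. Then for general $k$, the matrix $J_k(\frac{A}{d},0) = J_0(\frac{A}{d},0) - \frac{k^2}{\ell^2}\,\mathrm{diag}(r_1, r_2)$ remains upper triangular (the diffusion term is diagonal), so its eigenvalues are simply $\lambda_1(k) = -d - r_1\frac{k^2}{\ell^2}$ and $\lambda_2(k) = (d+\mu_1)(\mathbb{R}_0 - 1) - r_2\frac{k^2}{\ell^2}$. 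The triangular structure is the key simplification and means no discriminant analysis is needed.

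From here the conclusion is immediate. The eigenvalue $\lambda_1(k)$ is negative for all $k \geq 0$ since $d, r_1 > 0$. For $\lambda_2(k)$: if $\mathbb{R}_0 < 1$ then $(d+\mu_1)(\mathbb{R}_0-1) < 0$, so $\lambda_2(k) < 0$ for all $k \geq 0$, hence all eigenvalues of all $J_k$ are negative and $E_0$ is stable; if $\mathbb{R}_0 > 1$ then $\lambda_2(0) = (d+\mu_1)(\mathbb{R}_0 - 1) > 0$, so the $k=0$ mode already yields a positive eigenvalue and $E_0$ is unstable. I would then invoke the standard principle of linearized stability for reaction-diffusion systems with Neumann boundary conditions (e.g., the spectral analysis via Fourier decomposition \eqref{eigenfunctions}, as in Henry or the references cited) to pass from the sign information on the spectrum to asymptotic stability/instability of the steady state.

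There is essentially no hard part here: the only mild subtlety is confirming that $E_0$ remains hyperbolic for the reaction-diffusion problem when $\mathbb{R}_0 < 1$ — i.e., that the spectrum stays bounded away from the imaginary axis — which follows because $\lambda_1(k) \leq -d$ uniformly in $k$ and $\lambda_2(k) \leq (d+\mu_1)(\mathbb{R}_0 - 1) < 0$ uniformly in $k$, so the supremum of the real parts of the spectrum is strictly negative. This uniform gap is what legitimizes the standard linearized-stability theorem and completes the proof.
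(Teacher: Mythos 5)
Your proposal is correct and follows essentially the same route as the paper: evaluating $J_k(\tfrac{A}{d},0)$, noting its triangular structure, reading off the eigenvalues $\lambda_1(k)=-(d+\tfrac{r_1k^2}{\ell^2})$ and $\lambda_2(k)=(d+\mu_1)(\mathbb{R}_0-1)-\tfrac{r_2k^2}{\ell^2}$, and concluding stability for $\mathbb{R}_0<1$ (all modes) and instability for $\mathbb{R}_0>1$ (the $k=0$ mode). Your added remark on the uniform spectral gap justifying the passage from linear to nonlinear stability is a nice refinement but does not change the argument.
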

 \begin{proof}
Evaluating the matrix $J_k$ at $E_0$ yields
  $$J_k(\frac{A}{d}, 0)=\left[\begin{array}{cc}
-(d+\frac{r_1k^2}{\ell^2}) & -\left(d +\mu_1 \right)\mathbb{R}_0
\\
 0 & (d+\mu_1)(\mathbb{R}_0-1)-\frac{r_2k^2}{\ell^2}
\end{array}\right].$$
There are two eigenvalues $\lambda_1=-(d+\frac{r_1k^2}{\ell^2})<0$ and $\lambda_2=(d+\mu_1)(\mathbb{R}_0-1)-\frac{r_2k^2}{\ell^2}$. If $\mathbb{R}_0<1$, $\lambda_2<0$ fro all $k\in\mathbb{Z}^+\cup\{0\}$, then $E_0$ is stable. If $\mathbb{R}_0>1$, then $\lambda_2>0$ for $k=0$, and $E_0$ is unstable.
\end{proof}
\begin{remark}\label{R32}
By Theorem \ref{E1 saddle E2 antisaddle}, the constant steady state $E_1$ of system \eqref{RD model} is always unstable regardless of the diffusion effect, since $J_0(S_1, I_1)$ always has a positive eigenvalue. We focus on the constant steady state $E_2$, and investigate possible stability changes driven by the diffusion.
\end{remark}


\subsection{Diffusion-driven instability}

In the following, we explore the spatially inhomogeneous pattern through the Turing instability.

\begin{definition} \cite{jiang2020formulation}
Let $k\in \mathbb{Z}^+$ be a positive integer and fixed. Let $\xi\in\mathbb{R}$ be a bifurcation parameter. A $k$-mode Turing bifurcation occurs if the Jacobian $J_k$ has a unique real eigenvalue $\lambda=\lambda(k)$ crossing zero transversally provided that the spectral gap condition holds, that is, all remaining eigenvalues of $J_n (n=0, 1,\cdots)$ are to the left of the imaginary axis as $\xi$ passing through $\xi=\xi^\ast\in \mathbb{R}$.
\end{definition}

The matrix $J_k (k\in\mathbb{Z}^+)$, evaluated at $E_2$ is $$J_k=\begin{bmatrix}
    \delta_1-\frac{r_1 k^2}{\ell^2} & \delta_2\\
    \delta_3 &\delta_4-\frac{r_2 k^2}{\ell^2}
\end{bmatrix},$$
where $\delta_1=-(\beta I_2+d)<0$, $\delta_2=-(h(I_2)+d)<0$, $\delta_3=\beta I_2>0$ and $\delta_4=-h'(I_2)I_2>0$ are the entries of the Jacobian $J_0(S_2,I_2)$. Assume either of conditions (a), (b), and (c) of Theorem \ref{Tr(J)<0} holds, i.e., $E_2$ is stable for the underlying ODE model \eqref{submodel}. Then
$$\mathcal{T}_k=\delta_1+\delta_4-\frac{(r_1+r_2)k^2}{\ell^2}, \quad \mathcal{D}_k=\frac{r_1r_2}{\ell^4}k^4-\frac{(\delta_1 r_2 +\delta_4 r_1) }{\ell^2}k^2+\delta_1\delta_4-\delta_2\delta_3,$$
and $\mathcal{T}_0=\delta_1+\delta_4<0$ and $\mathcal{D}_0=\delta_1\delta_4-\delta_2\delta_3>0$.

Define $\gamma=\frac{r_2}{r_1}$, and set the discriminant of $\mathcal{D}_k$ with respect to $k^2$ equal to zero, then we obtain
$$\delta_1^2\gamma^2+2(2\delta_2\delta_3-\delta_1\delta_4)\gamma+ \delta_4^2=0,$$
which has two positive roots
$$\gamma_{\pm}=\frac{\delta_1\delta_4-2\delta_3\delta_2\pm 2\sqrt{-\delta_3\delta_2(\delta_1\delta_4-\delta_3\delta_2)}}{\delta_1^2},$$ where $0<\gamma_-<\gamma_+$. Let $\bar{\gamma}=-\frac{\delta_4}{\delta_1},$ and then
$$\gamma_--\bar{\gamma}=\frac{2\delta_1\delta_4-2\delta_3\delta_2-2\sqrt{-\delta_3\delta_2(\delta_1\delta_4-\delta_3\delta_2)}}{\delta_1^2}<0,$$ and
$$\gamma_+-\bar{\gamma}=\frac{2\delta_1\delta_4-2\delta_3\delta_2+2\sqrt{-\delta_3\delta_2(\delta_1\delta_4-\delta_3\delta_2)}}{\delta_1^2}>0.$$
Hence, $0<\gamma_-<\bar{\gamma}<\gamma_+$.

If $\gamma\geq \bar{\gamma}$, then $\delta_1r_2+\delta_4r_1\leq 0$, which yields $\mathcal{D}_k>0$ for all $k\in\mathbb{Z}^+$.

If $\gamma_-<\gamma<\bar{\gamma}$, then the discriminant of $\mathcal{D}_k$ is negative and therefore $\mathcal{D}_k>0$ for all $k\in\mathbb{Z}^+$.

If $\gamma=\gamma_-$, then the discriminant of $\mathcal{D}_k$ is zero and therefore, $\mathcal{D}_k\geq 0$ for all $k\in\mathbb{Z}^+$

If $\gamma<\gamma_-$, then the discriminant of $\mathcal{D}_k$ is positive. Hence, $\mathcal{D}_k$ may change sign, and the steady state $E_2$ loses its stability. To study the instability driven by the diffusion on the steady state $E_2$, we rewrite $\mathcal{D}_k$ as
$$\mathcal{D}_k=\left[\left(\frac{k}{\ell}\right)^2r_2-\delta_4\right]\left(\frac{k}{\ell}\right)^2r_1-\left(\frac{k}{\ell}\right)^2\delta_1r_2+\delta_1\delta_4-\delta_2\delta_3.$$

If $\delta_4\ell^2\leq r_2$, then $\mathcal{D}_k>0$ for all $k\in\mathbb{Z}^+$, and the steady state $E_2$ is stable.

If $\delta_4\ell^2> r_2$, solve $\mathcal{D}_k=0$ for $r_1$ and obtain
\begin{equation}\label{r1}
r_1=r_1^{(k)}:=\frac{(\delta_1\delta_4-\delta_2 \delta_3 -\delta_1 r_2 \frac{k^2}{\ell^2} )}{\frac{k^2}{\ell^2}(\delta_4-r_2 \frac{k^2}{\ell^2})},
\end{equation}
where the superscript $(k)$ emphasizes the dependence of $r_1$ on the integer $k$. Notice that the numerator of $r_1^{(k)}$ is positive, so $r_1^{(k)}>0$ only if $0<k\leq\bar{k}$, where \begin{equation*}
    \bar{k}=\begin{cases}
     \sqrt{\frac{\delta_4\ell^2}{r_2}}-1 \hskip 1cm  \text{if}\,  \sqrt{\frac{\delta_4\ell^2}{r_2}}\in \mathbb{Z}^+,\\
\lfloor\sqrt{\frac{\delta_4\ell^2}{r_2}}\rfloor  \hskip 1.32cm  \text{if}\,  \sqrt{\frac{\delta_4\ell^2}{r_2}}\notin \mathbb{Z}^+,\\
\end{cases}
\end{equation*}
where $\lfloor\cdot\rfloor$ denotes the floor function.  It follows from
 $$\frac{d}{d k}\left(r_1^{(k)}\right)=-\frac{2\left[\left(\delta_1 \,\delta_4-\delta_2 \delta_3 \right)\delta_4\ell^4-2 k^2 r_2 \left(\delta_1 \delta_4 -\delta_2 \delta_3 \right) \ell^2 +\delta_1 \,k^4 r_2^{2}\right] \ell^2}{k^3 \left(\delta_4 \ell^2 -k^2 r_2 \right)^{2}}$$ that $r=r_1^{(k)}$ has a critical point
$$k=\hat{k}=\sqrt{\left(\delta_1 \delta_4 -\delta_2 \delta_3 +\sqrt{-\delta_3\delta_2(\delta_1\delta_4-\delta_3\delta_2)}\right) \ell^2/(\delta_1 r_2)}\in (0, \sqrt{\delta_4\ell^2/r_2}).$$
 Moreover, $\frac{d}{d k}(r_1^{(k)})<0$ for $k<\hat{k}$ and  $\frac{d}{d k}(r_1^{(k)})>0$ for $k>\hat{k}$. Hence, the function $r_1=r_1^{(k)}$ is $U$-shaped in the first quadrant of the $(k, r_1)$-plane. It is apparent that $\mathcal{D}_k>0$ if $r_1<r_1^{(k)}$, and $\mathcal{D}_k<0$ if $r_1>r_1^{(k)}$.

Define $\mathscr{N}=\{k\in\mathbb{N}|1\leq k\leq\bar{k}\}$, and take
$$\breve{k}=\arg\min_{k \in \mathscr{N}} r_1^{(k)},$$ i.e.,  the function $r_1=r_1^{(k)}$ is minimized at $k=\breve{k}$.

\begin{figure}[H]
    \centering    \includegraphics[width=0.6\linewidth]{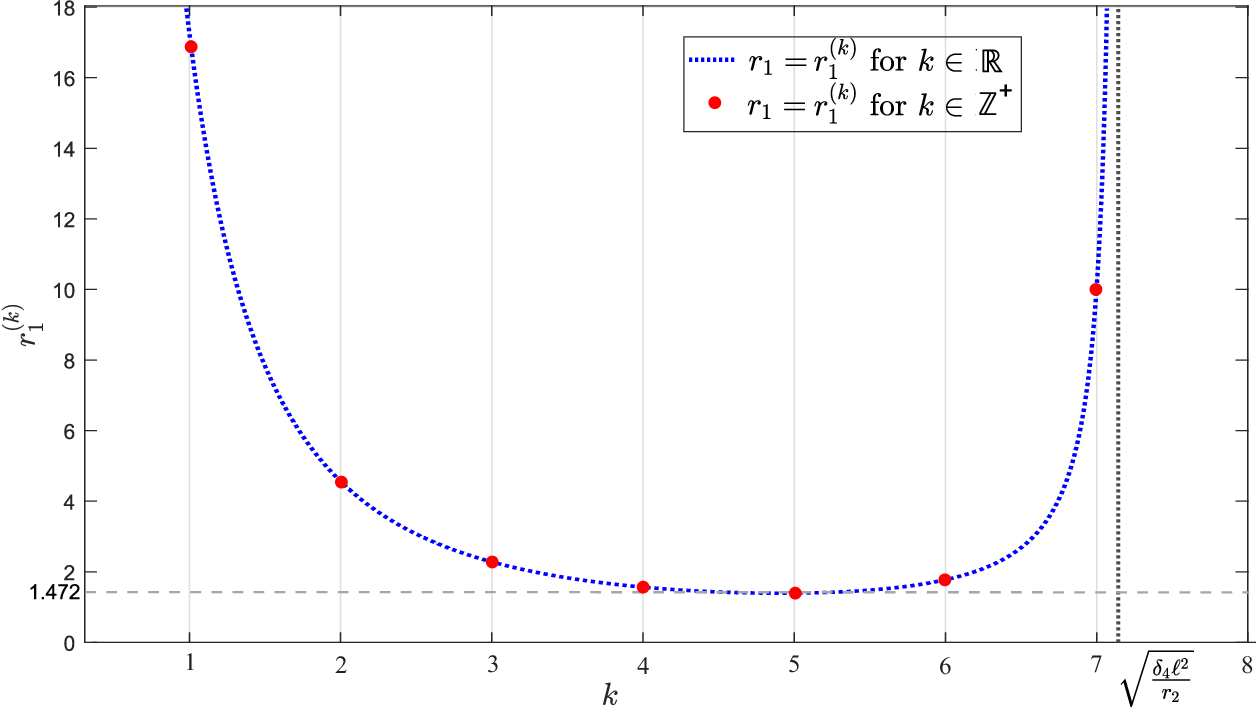}
    \caption{\footnotesize Graph of $r_1=r_1^{(k)}$.  Here, $\beta = 0.1$,
$d = 0.1$,
$\mu_0 = 0.1$,
$\mu_1 = 0.2$,
$b = 0.3$,
$A = 0.4$,
$\ell=5$,
$r_2 = 0.01$.
The blue dashed curve represents the overall profile of \(r=r_1^{(k)}\) as \(k\) varies continuously. The red markers pinpoint the discrete \(k\)-mode Turing bifurcations, where the vertical lines at \(k=1,\cdots,7\) meet the \(r_1^{(k)}\) curve. The graph shows the presence of 7 distinct $k$-mode Turing bifurcations at approximately $(1,17.045),(2,4.6028),(3,2.346),(4,1.638),(5,1.472),(6,1.859),(7,10.733)$. From the graph, the upper bound for $k$ is $\bar{k}=7$, and the lowest threshold of Turing bifurcation is $r_1=1.471$, which occurs at $k=\Breve{k}=5$.}
\label{Fig: r1vsk}
\end{figure}

\begin{theorem}\label{k-mode Turing theorem}
    Under the stability condition defined in Theorem \ref{Tr(J)<0},  the following holds.
    \begin{enumerate}\setlength\itemsep{0.cm}
        \item[(1)] The steady state $E_2$ is locally asymptotically stable if  one of the following is satisfied:
        \begin{enumerate}\setlength\itemsep{0.cm}
            \item $\gamma>\gamma_-$;

            \item $\gamma<\gamma_-$ and $\delta_4\ell^2\leq r_2$;

            \item $\gamma<\gamma_-, \delta_4\ell^2>r_2$ and $ r_1<r_1^{(\breve{k})}$.
        \end{enumerate}

        \item[(2)] The steady state $E_2$ is locally unstable for $\gamma<\gamma_-,\delta_4\ell^2>r_2$ and $r_1>r_1^{(\breve{k})}$.

        \item[(3)] Suppose that $r_1^{(i)}\neq r_1^{(j)}$ for $\forall i, j\in \mathscr{N}$ and $i\neq j$. Then system \eqref{RD model} undergoes a $k$-mode Turing bifurcation for $\gamma<\gamma_-,\delta_4\ell^2>r_2$ and $r_1=r_1^{(k)}$, where $k\in\mathscr{N}$.
    \end{enumerate}
\end{theorem}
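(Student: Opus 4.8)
The whole statement reduces to the sign of $\mathcal{D}_k$. Under the standing hypothesis (one of (a),(b),(c) of Theorem \ref{Tr(J)<0}) we have $\mathcal{T}_0=\delta_1+\delta_4<0$, and since $\mathcal{T}_k=\delta_1+\delta_4-(r_1+r_2)k^2/\ell^2$ is strictly decreasing in $k^2$, $\mathcal{T}_k<0$ for every $k\in\mathbb{Z}^+\cup\{0\}$. Hence both eigenvalues of $J_k$ have negative real part exactly when $\mathcal{D}_k>0$, and $J_k$ has a positive real eigenvalue exactly when $\mathcal{D}_k<0$. Since $\mathcal{D}_0=\delta_1\delta_4-\delta_2\delta_3>0$ is already known, the local asymptotic stability of $E_2$ for the reaction--diffusion system is equivalent to $\mathcal{D}_k>0$ for all $k\geq 1$ (together with the routine remark that the two eigenvalues of $J_k$ tend to $-\infty$ as $k\to\infty$, so the spectrum stays bounded away from the imaginary axis and the principle of linearized stability applies), while $E_2$ is unstable as soon as $\mathcal{D}_k<0$ for a single $k$. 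Everything else is the sign/monotonicity analysis of $\mathcal{D}_k$ already carried out before the statement.

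\textbf{Part (1).} I would argue by cases. If $\gamma>\gamma_-$, then either $\gamma_-<\gamma<\bar\gamma$, so the discriminant of $\mathcal{D}_k$ in $k^2$ is negative and, the leading coefficient $r_1r_2/\ell^4$ being positive, $\mathcal{D}_k>0$; or $\gamma\geq\bar\gamma$, so $\delta_1r_2+\delta_4r_1\leq 0$ and all three terms of $\mathcal{D}_k$ are nonnegative, again $\mathcal{D}_k>0$. If $\gamma<\gamma_-$ and $\delta_4\ell^2\leq r_2$, I use the rewritten form $\mathcal{D}_k=[(k/\ell)^2r_2-\delta_4](k/\ell)^2r_1-(k/\ell)^2\delta_1r_2+\delta_1\delta_4-\delta_2\delta_3$: for $k\geq 1$, $(k/\ell)^2r_2\geq r_2/\ell^2\geq\delta_4$, so the first term is $\geq 0$, the second is $>0$ (as $\delta_1<0$), the third is $>0$, hence $\mathcal{D}_k>0$. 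If $\gamma<\gamma_-$, $\delta_4\ell^2>r_2$ and $r_1<r_1^{(\breve k)}$, then for $1\leq k\leq\bar k$ we have $r_1<r_1^{(\breve k)}\leq r_1^{(k)}$, so $\mathcal{D}_k>0$ by the monotonicity of $\mathcal{D}_k$ in $r_1$ established earlier, whereas for $k>\bar k$ one has $(k/\ell)^2r_2\geq\delta_4$ and the same rewritten form gives $\mathcal{D}_k>0$. In every case $\mathcal{D}_k>0$ for all $k\geq 0$, so $E_2$ is locally asymptotically stable.

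\textbf{Parts (2) and (3).} For (2), the conditions $\gamma<\gamma_-$ and $\delta_4\ell^2>r_2$ make $\mathscr{N}$ nonempty so $\breve k$ is well defined; then $r_1>r_1^{(\breve k)}$ forces $\mathcal{D}_{\breve k}<0$, hence $J_{\breve k}$ has a positive real eigenvalue and $E_2$ is unstable. For (3), I would verify the three requirements in the definition of a $k$-mode Turing bifurcation with bifurcation parameter $r_1$ and critical value $r_1^{(k)}$. At $r_1=r_1^{(k)}$ one has $\mathcal{D}_k=0$, $\mathcal{T}_k<0$, so $0$ is a simple eigenvalue of $J_k$ with the other eigenvalue $\mathcal{T}_k<0$; since the discriminant $\mathcal{T}_k^2-4\mathcal{D}_k\to\mathcal{T}_k^2>0$ near $r_1^{(k)}$, the two eigenvalues of $J_k$ stay real and exactly one of them, $\lambda(k)$, crosses zero. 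Transversality follows by differentiating $P_k(\lambda)=\lambda^2-\mathcal{T}_k\lambda+\mathcal{D}_k=0$ implicitly in $r_1$ at $\lambda=0$, giving $d\lambda/dr_1=(\partial\mathcal{D}_k/\partial r_1)/\mathcal{T}_k$ with $\partial\mathcal{D}_k/\partial r_1=(k^2/\ell^2)(r_2k^2/\ell^2-\delta_4)<0$ because $k\leq\bar k<\sqrt{\delta_4\ell^2/r_2}$, so $d\lambda/dr_1\neq 0$. The spectral gap requires $\mathcal{D}_n>0$ for all $n\neq k$: for $n>\bar k$ this is automatic from the rewritten form of $\mathcal{D}_n$ as in Part (1), while for $n\in\mathscr{N}$, $n\neq k$, it amounts to $r_1^{(k)}<r_1^{(n)}$, which is exactly where the hypothesis that the thresholds $r_1^{(i)}$ are pairwise distinct is used (so the minimizing mode is unique and the relevant crossing $r_1^{(k)}=r_1^{(\breve k)}$ occurs strictly before all others).

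\textbf{Main obstacle.} I expect the delicate point to be precisely this spectral-gap verification in Part (3): one must keep track of which modes actually admit a Turing bifurcation at the constant steady state $E_2$ (effectively only the mode minimizing $r_1^{(k)}$ over $\mathscr{N}$) versus the modes $k>\bar k$ for which $\mathcal{D}_k$ never vanishes, and confirm that at the chosen threshold all $J_n$ with $n\neq k$ retain both eigenvalues in the open left half-plane. The remaining steps are bookkeeping layered on the sign and monotonicity analysis of $\mathcal{D}_k$ carried out before the statement.
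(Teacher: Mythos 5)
Your proposal follows essentially the same route as the paper: parts (1) and (2) are exactly the sign/monotonicity analysis of $\mathcal{D}_k$ set up before the theorem (which the paper's proof dismisses as ``obvious''), and part (3) rests on the same transversality computation — your $\partial\mathcal{D}_k/\partial r_1=\frac{k^2}{\ell^2}\left(\frac{r_2k^2}{\ell^2}-\delta_4\right)<0$ is the paper's $\frac{d\mathcal{D}_k}{dr_1}=\frac{(r_2k^2-\delta_4\ell^2)k^2}{\ell^4}<0$ for $k\le\bar{k}$, and your implicit differentiation gives the same conclusion that a single real eigenvalue of $J_k$ crosses zero transversally while the companion eigenvalue stays at $\mathcal{T}_k<0$. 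The one place you genuinely diverge is the spectral gap in (3): the paper simply asserts that at $r_1=r_1^{(k)}$ every $J_n$ with $n\neq k$ has eigenvalues with negative real parts ``from the assumption,'' whereas you correctly observe that pairwise distinctness of the thresholds $r_1^{(i)}$ only rules out a simultaneous zero eigenvalue in another mode; for $k\neq\breve{k}$ the mode $\breve{k}$ already has a positive real eigenvalue at $r_1=r_1^{(k)}>r_1^{(\breve{k})}$, so the spectral gap condition in the cited definition holds in full only for the minimizing mode. Your caution here exposes a looseness in the paper's own proof (and in the statement of (3) if the definition is read strictly) rather than a gap in yours; for $k=\breve{k}$ your argument is complete, and your parts (1) and (2) correctly supply the case analysis the paper omits.
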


\begin{proof}



Statements (1) and (2) are obvious. We only prove statement (3).

Fix $k\in\mathscr{N}$. If $r_1=r_1^{(k)}$, If follows from the assumption that the matrix $J_n$ $(\forall  n\in\mathbb{Z}^+\setminus\{k\})$ has eigenvalues with negative real parts, while $J_{k}$ has a negative eigenvalue and a zero eigenvalue. A straightforward calculation leads to
$$\frac{d\mathcal{D}_k}{dr_1} _{\big|r_1=r_1^{(k)}}=\frac{\left(r_2 k^2-\delta_4 \ell^2\right)k^2}{\ell^4}<0.$$
The inequality holds since $k\leq\bar{k}$. Hence, a single real eigenvalue of $J_{k}$ crosses the imaginary axis from left to right transversely at the origin, and a $k$-mode Turing bifurcation occurs at $r_1=r_1^{(k)}$.
\end{proof}

\begin{remark} For the numerical study presented in this paper,  we assume the initial condition $S(x, 0)=S_2+0.01\cos(0.4x), I(x, 0) = I_2 + 0.01\cos(0.4x), x\in [0, 5\pi]$ unless otherwise stated.
\end{remark}

\begin{figure}[!ht]
    \centering
    \subfigure[Surface $S(x, t)$]{%
        \includegraphics[width=0.45\linewidth]{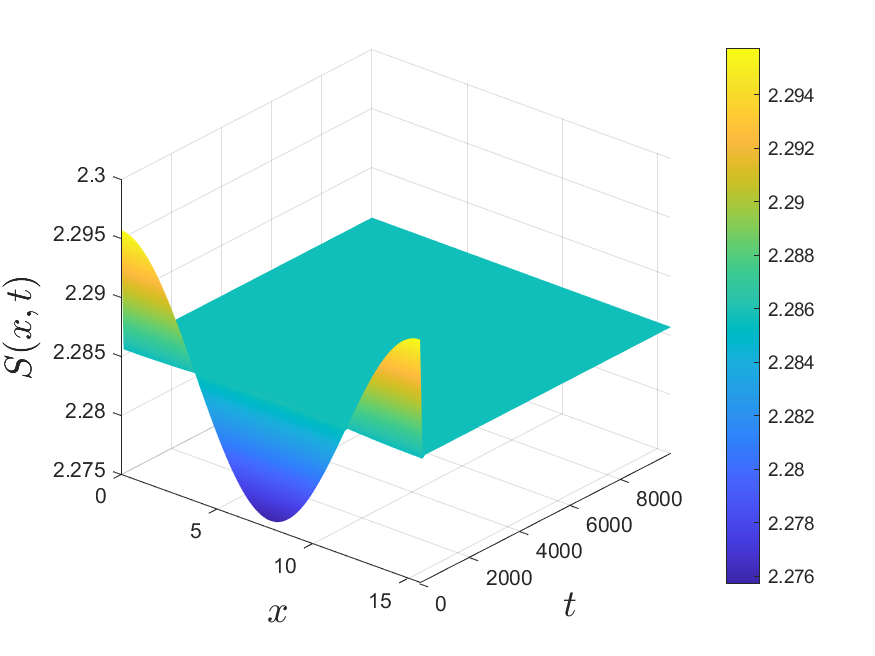}%
    }
    \hfill
    \subfigure[Surface $I(x, t)$]{%
        \includegraphics[width=0.45\linewidth]{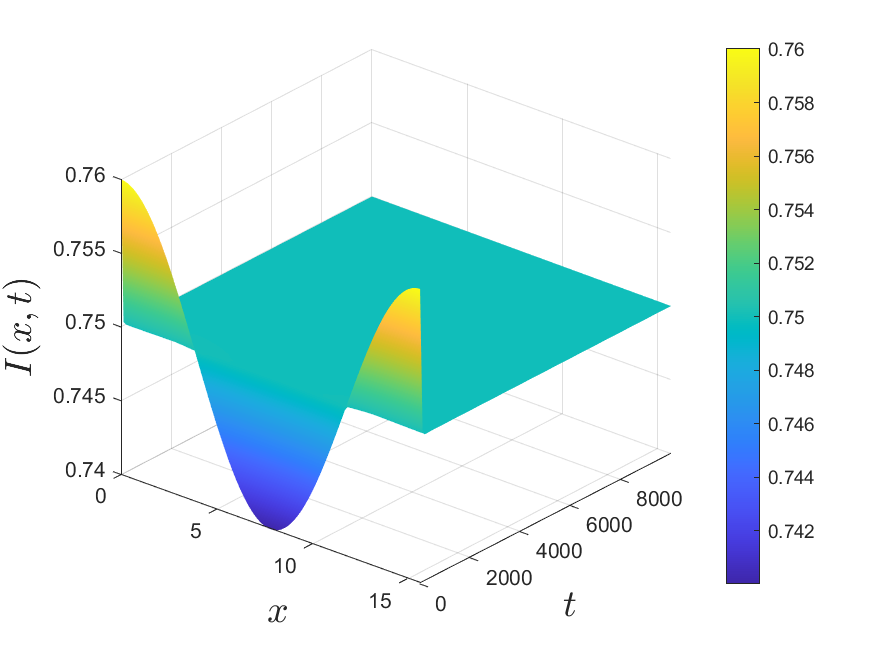}%
    }
    \caption{The solution converges to the stable constant steady state $E_2$.}  \label{steady}
\end{figure}

\begin{figure}[!ht]
    \centering
    \subfigure[Surface $S(x, t)$]{%
        \includegraphics[width=0.45\linewidth]{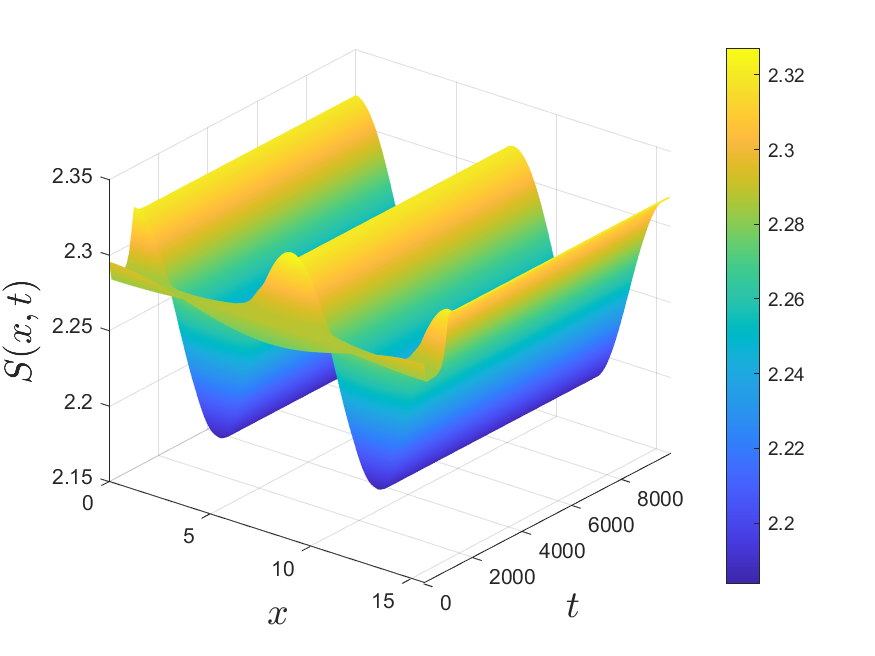}%
    }
    \hfill
    \subfigure[Surface $I(x, t)$]{%
        \includegraphics[width=0.45\linewidth]{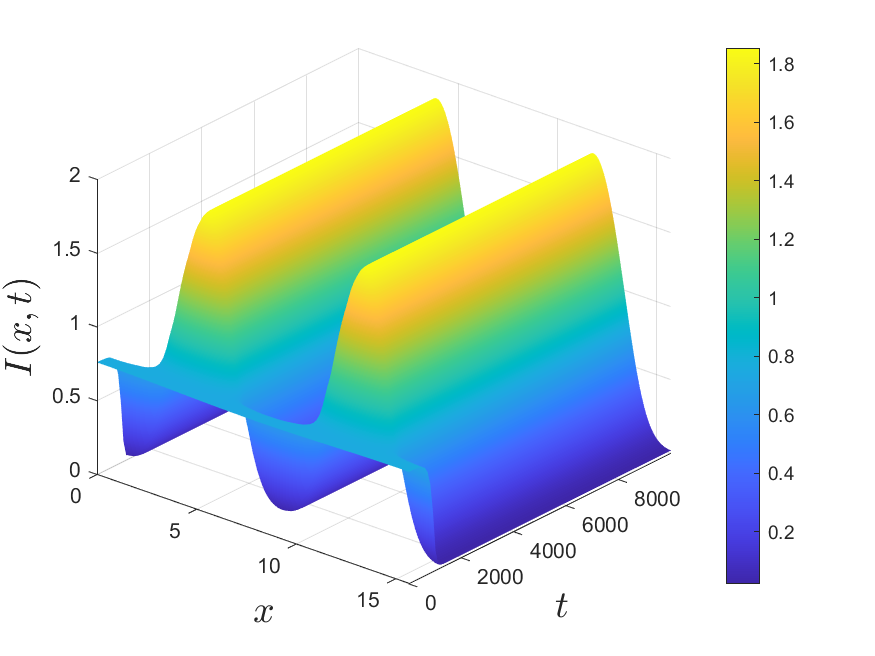}%
    }
    \caption{A spatially inhomogeneous solution (spatially patterned wave) merges due to a sequence of $k$-mode Turing bifurcation ($k=5, 4, 6, 3$), while the constant steady state $E_2$ loses its stability.} \label{Turing}
\end{figure}

From Fig. \ref{Fig: r1vsk} one can see that the minimal threshold of Turing bifurcation for $r_1$ is $r_1^{(5)}=1.472$. If $r_1=1<r_1^{(5)}$, by Theorem \ref{k-mode Turing theorem} (1), diffusion does not destabilize the constant steady state $E_2$. See Fig. \ref{steady} (a) and (b), where the solution converges to the stable steady state $E_2$ as the initial perturbation damps rapidly. If $r_1=4.6028$, then $r_1>r_1^{(5)}> r_1^{(4)}>r_1^{(6)}>r_1^{(3)}$. By Theorem \ref{k-mode Turing theorem} (3), a sequence of $k$-mode Turing bifurcation occurs ($k=5, 4, 6, 3$) as $r_1$ increases from 1 to 1.472, and the formation of spatially inhomogeneous solution is across the domain. See  Fig. \ref{Turing} (a) and (b), from which one can observe that the surfaces of $S(x, t)$ and $I(x, t)$ are striped in the $x$ direction, yet flat in the $t$ direction. 



\subsection{\texorpdfstring{$k$}{}-mode Hopf bifurcation}

\begin{definition}
\cite{jiang2020formulation}
Let $k\in \mathbb{Z}^+\cup\{0\}$ be an integer and fixed. Let $\xi\in\mathbb{R}$ be a bifurcation parameter. A $k$-mode Hopf bifurcation occurs if the Jacobian $J_k$ has a pair of purely imaginary eigenvalues crossing the imaginary axis transversally and all eigenvalues of $J_n (n\neq k)$ are not on the imaginary axis as $\xi$ passes through $\xi=\xi^\ast\in \mathbb{R}$.
\end{definition}

Consider $\beta$ as a bifurcation parameter, and regard $\mathcal{T}_k$ as a function of $\beta$, i.e., $\mathcal{T}_k=\mathcal{T}_k(\beta)$, then we have the $k$-mode Hopf bifurcation theorem as follows.

\begin{theorem}\label{kHopf}
Let $k\in \mathbb{Z}^+\cup\{0\}$ be fixed. If there exists $\beta=\beta_H^{(k)}$ such that $\mathcal{T}_k(\beta_H^{(k)})=0$ and $\mathcal{T}'_k(\beta_H^{(k)})\neq 0$. Then system \eqref{RD model} undergoes a $k$-mode Hopf bifurcation around the steady state $E_2$ at $\beta=\beta_H^{(k)}$ provided that $\mathcal{D}_k(\beta_H^{(k)})>0$.
\end{theorem}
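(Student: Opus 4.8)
The definition of a $k$-mode Hopf bifurcation is purely spectral, so the plan is to verify three things at $\beta=\beta_H^{(k)}$: (i) $J_k(S_2,I_2)$ has a purely imaginary conjugate pair; (ii) this pair crosses the imaginary axis transversally as $\beta$ varies; and (iii) every other reduced matrix $J_n$ $(n\neq k)$ keeps its spectrum off the imaginary axis. All three reduce to sign and vanishing information about the trace $\mathcal{T}_n$ and determinant $\mathcal{D}_n$ of $J_n$, regarded as functions of $n^2$ and of $\beta$; the $\beta$-dependence enters through $I_2=I_2(\beta)$ and $S_2=S_2(\beta)$, which are smooth in $\beta$ by the Implicit Function Theorem (just as with the computation of $\partial I_2/\partial A$ in the proof of Theorem \ref{Hopf}).

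For (i): the hypotheses $\mathcal{T}_k(\beta_H^{(k)})=0$ and $\mathcal{D}_k(\beta_H^{(k)})>0$ make the roots of $P_k(\lambda)=\lambda^2-\mathcal{T}_k\lambda+\mathcal{D}_k$ equal to $\lambda_{1,2}=\pm i\sqrt{\mathcal{D}_k(\beta_H^{(k)})}$, a conjugate pair on the imaginary axis with nonzero imaginary part. For (ii): because $\mathcal{T}_k(\beta_H^{(k)})^2-4\mathcal{D}_k(\beta_H^{(k)})=-4\mathcal{D}_k(\beta_H^{(k)})<0$ and $\mathcal{T}_k,\mathcal{D}_k$ are continuous in $\beta$, the discriminant of $P_k$ stays negative near $\beta_H^{(k)}$, so the eigenvalues there are genuine complex conjugates with $\operatorname{Re}\lambda_{1,2}(\beta)=\tfrac12\mathcal{T}_k(\beta)$; hence $\frac{d}{d\beta}\operatorname{Re}\lambda_{1,2}\big|_{\beta=\beta_H^{(k)}}=\tfrac12\mathcal{T}'_k(\beta_H^{(k)})\neq0$, which is the required transversal crossing.

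For (iii): from $\mathcal{T}_n=\delta_1+\delta_4-(r_1+r_2)n^2/\ell^2$ we get $\mathcal{T}_n-\mathcal{T}_k=-(r_1+r_2)(n^2-k^2)/\ell^2$, so at $\beta=\beta_H^{(k)}$, where $\mathcal{T}_k=0$, we have $\mathcal{T}_n(\beta_H^{(k)})\neq0$ for all $n\neq k$, since distinct nonnegative integers have distinct squares. A real $2\times2$ matrix with nonzero trace has no purely imaginary conjugate pair, so the only way some $J_n$ $(n\neq k)$ could still meet the imaginary axis is through a zero eigenvalue, i.e.\ $\mathcal{D}_n(\beta_H^{(k)})=0$. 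I expect this to be the main obstacle: unlike $\mathcal{T}_n$, which is strictly monotone in $n^2$ and hence vanishes for at most one mode, $\mathcal{D}_n$ is an upward parabola in $n^2$ with $\mathcal{D}_0=B_0(I_2)>0$ (Theorem \ref{E1 saddle E2 antisaddle}) but is not monotone, so it could in principle vanish at a competing integer mode --- which would be a Turing--Hopf degeneracy rather than a pure $k$-mode Hopf one. I would dispose of it exactly as the analogous point is handled in Theorem \ref{k-mode Turing theorem}(3): impose the genericity condition $\mathcal{D}_n(\beta_H^{(k)})\neq0$ for all $n\neq k$ (it holds automatically whenever no Turing threshold is crossed, in which case $\mathcal{D}_n>0$ for every $n$). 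Granting this, every $J_n$ with $n\neq k$ has spectrum strictly off the imaginary axis, and together with (i)--(ii) all conditions in the definition are met, so system \eqref{RD model} undergoes a $k$-mode Hopf bifurcation at $\beta=\beta_H^{(k)}$.
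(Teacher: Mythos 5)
Your proof is correct and follows essentially the same route as the paper: the purely imaginary pair comes from $\mathcal{T}_k(\beta_H^{(k)})=0$ together with $\mathcal{D}_k(\beta_H^{(k)})>0$, transversality from $\frac{d}{d\beta}\operatorname{Re}\lambda_{1,2}\big|_{\beta=\beta_H^{(k)}}=\tfrac12\mathcal{T}'_k(\beta_H^{(k)})\neq 0$, and the competing modes are excluded via $\mathcal{T}_n(\beta_H^{(k)})=(r_1+r_2)(k^2-n^2)/\ell^2\neq 0$ for $n\neq k$. The one place you go beyond the paper is worth noting: the paper's proof infers from the nonvanishing traces alone that no eigenvalue of $J_n$ $(n\neq k)$ lies on the imaginary axis, which silently passes over the possibility of a zero eigenvalue ($\mathcal{D}_n(\beta_H^{(k)})=0$, i.e.\ a coincident Turing mode, precisely the degeneracy treated later in the Turing--Hopf theorem); your explicit genericity requirement $\mathcal{D}_n(\beta_H^{(k)})\neq 0$ for all $n\neq k$ is exactly what is needed to make that step airtight and is implicitly assumed in the theorem as stated.
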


\begin{proof}
Since $\mathcal{T}_k(\beta_H^{(k)})=0$, then $J_k$ has a pair of complex conjugate eigenvalues $\lambda_{1,2}=\lambda_{1,2}(\beta)$ in a small neighborhood of $\beta_H^{(k)}$ and $Re(\lambda_{1,2}(\beta_H^{(k)}))=0$ and $Im(\lambda_{1,2}(\beta_H^{(k)}))\neq 0$. Then
$$\dfrac{d(Re(\lambda_{1,2}(\beta)))}{d\beta}_{|\beta=\beta_H^{(k)}}=\frac{1}{2}\mathcal{T}'_k(\beta_H^{(k)})\neq 0.$$ Hence, the transversality condition is fulfilled.

Note that $\mathcal{T}_j=\mathcal{T}_k+\frac{(r_1+r_2)(k^2-j^2)}{\ell^2}$. Then  $\mathcal{T}_j(\beta_H^{(k)})=\frac{(r_1+r_2)(k^2-j^2)}{\ell^2}\neq 0$ for $j\neq k$. Therefore, all eigenvalues of $J_n (n\neq k)$ have no zero real parts. The desired result is obtained.
\end{proof}

\begin{remark} For analyzing the stability of the periodic solution bifurcating from the $k$-mode Hopf bifurcation, it is necessary to compute the first Lyapunov coefficient. It has been shown that the first Lyapunov coefficient of the $0$-mode Hopf bifurcation of the PDE model has the same sign as that of the underlying ODE model \cite{SR2015, YWS2009}. Hence, the stability and direction of the $0$-mode Hopf bifurcation have been characterized by Theorem \ref{Hopf}. In contrast, for the stability and direction of the $k$-mode Hopf bifurcation with $k\in\mathbb{Z}^+$, we need to apply the center manifold theorem and normal form computation, and we will leave it for future study.
\end{remark}

\begin{figure}[H]
    \centering
    \subfigure[Surface $S(x, t)$]{%
         \includegraphics[width=0.48\linewidth]{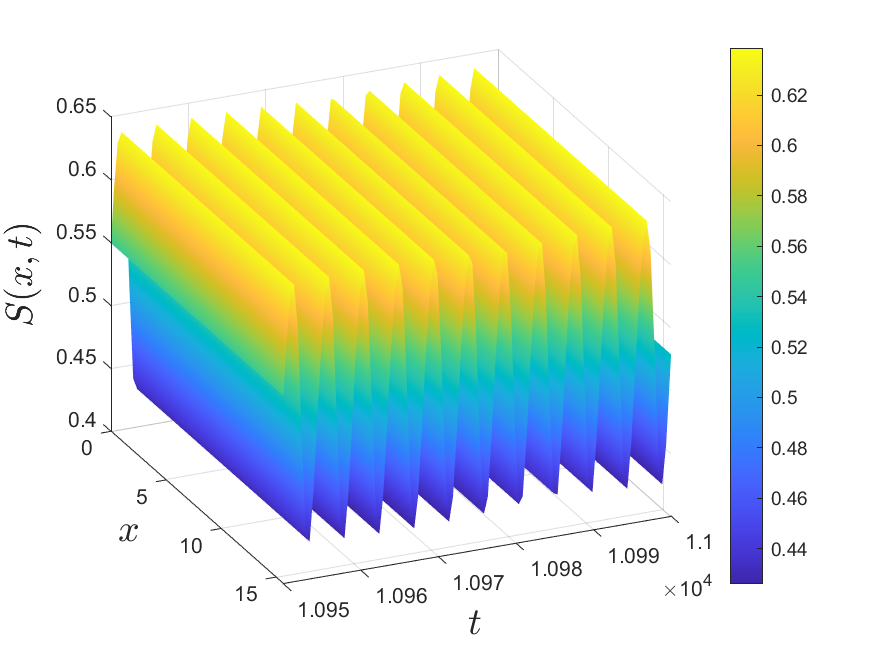}%
    }
    \hfill
    \subfigure[Surface $I(x, t)$]{%
         \includegraphics[width=0.48\linewidth]{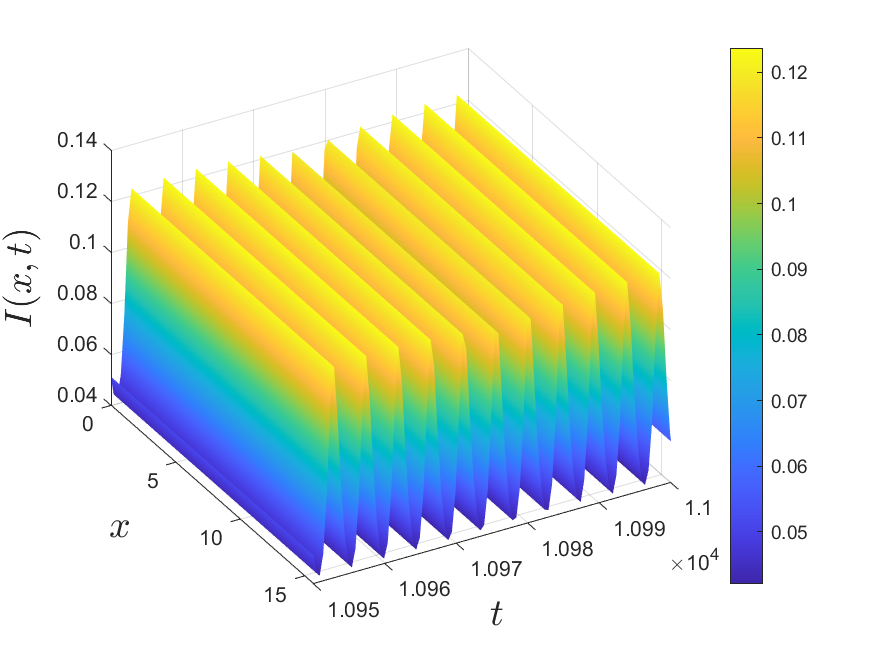}%
    }
    \caption{A spatially homogeneous periodic solution generated by $0$-mode
Hopf bifurcation. It is the spatiotemporal version of the limit cycle in Fig. \ref{HH} (b).}
    \label{zeroHopf}
\end{figure}

\begin{remark} The periodic solutions bifurcating from 0-mode Hopf bifurcation are spatially homogeneous, which coincides with the periodic solution of the underlying ODE system \eqref{submodel}. The periodic solutions bifurcating from a $k$-mode Hopf bifurcation are spatially inhomogeneous if $k\in\mathbb{Z}^+$.
\end{remark}

\noindent {\bf Example 2}. {\it  Spatially homogeneous and inhomogeneous periodic solution.}


\begin{itemize}\setlength\itemsep{0.1cm}
    \item[(1)] The model \eqref{RD model} exhibits a spatially homogeneous periodic solution generated by $0$-mode Hopf bifurcation.  See Fig. \ref{zeroHopf}, where $r_1=0.01$, $r_2=0.01$, and other parameters are the same as those for Fig. \ref{HH} (b). We obtain the exact limit cycle in Fig. \ref{HH} (b) by fixing any $x\in\Omega$. The surfaces of $S(x, t)$ and $I(x, t)$ show the periodicity in the $t$ direction and a series of parallel ridges aligned along the $x$-axis. The solutions are constant in the $x$ direction, implying spatial homogeneity and temporal oscillation.

    \item[(2)] The model \eqref{RD model} exhibits a spatially inhomogeneous periodic solution near a $2$-mode Hopf bifurcation. See Fig. \ref{kkHopf}. The surfaces of $S(x, t)$ and $I(x, t)$ show striped, ridge-like structures, indicating that infective clusters persist and reappear in fixed spatial locations, though the intensity varies over time.
Here, $\beta = 0.0088$, $d = 0.01$, $\mu_0 = 0.1$, $\mu_1 = 10$, $b = 0.03$,
$A=1$, $r_1=0.05$ and $r_2 = 0.01$. The initial condition is $S(x, 0)=S_2+0.00001\cos(0.2x), I(x, 0)=I_2 + 0.00001\cos(0.2x), x\in [0, 5\pi]$.
\end{itemize}

\begin{figure}[H]
    \centering
    \subfigure[Surface $S(x, t)$]{%
         \includegraphics[width=0.48\linewidth]{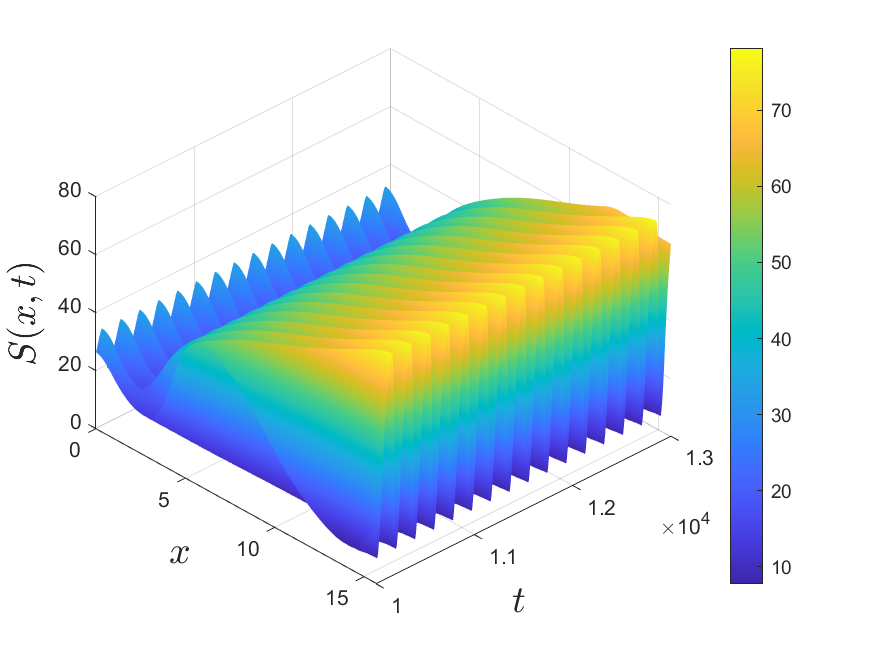}%
    }
    \hfill
    \subfigure[Surface $I(x, t)$]{%
         \includegraphics[width=0.48\linewidth]{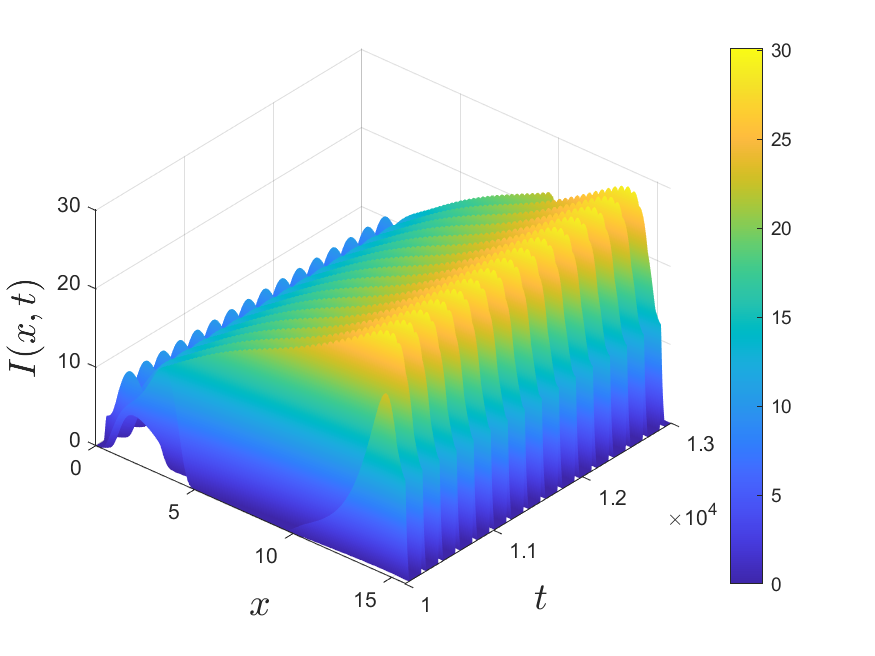}%
    }
    \caption{A spatially inhomogeneous periodic solution near a $2$-mode Hopf bifurcation.}
    \label{kkHopf}
\end{figure}


\subsection{\texorpdfstring{$(k_1, k_2)$}{}-mode Turing-Hopf bifurcation}




The occurrence of Turing-Hopf bifurcation is one of the main mechanisms for the spatiotemporal patterns of a reaction-diffusion system, which has been extensively observed and studied \cite{AnJiang2019, CJ2022, JiangWangCao2019, LWD, RMP, RovinskyMenzinger1992, SanchezGarduno2019, SR2015,  SJLY, turing1952morphogenesis, WangWangQi2024,YangSong2016}. The terminology and definition of a $(k_1, k_2)$-mode Turing-Hopf bifurcation are proposed by Jiang et al. The readers are referred to \cite{jiang2020formulation} for the definition.  Based on The Turing instability Theorem \ref{k-mode Turing theorem}, Hopf bifurcation Theorem \ref{kHopf}, and according to the definition of Turing-Hopf bifurcation \cite{jiang2020formulation}, we have the results if the Turing bifurcation and Hopf bifurcation occur simultaneously.

\begin{theorem}\label{Turing-Hopf}  Suppose that $r_1^{(i)}\neq r_1^{(j)}$ for $\forall i, j\in \mathscr{N}$ and $i\neq j$. Let $k_1\in\mathbb{Z}^+$, $k_2\in\mathbb{Z}^+\cup \{0\}$ and $k_1\neq k_2$. Suppose that  $r_1=r_1^{(k_1)}$ and there exists $\beta=\beta_H^{(k_2)}$ such that $\mathcal{T}_{k_{2}}(\beta_H^{(k_2)})=0$ and $\mathcal{T}'_{k_{2}}(\beta_H^{(k_2)})\neq 0$.  Then system $\eqref{RD model}$ undergoes a $(k_1,k_2)$-mode Turing-Hopf bifurcation around the steady state $E_2$ for $(r_1, \beta)=(r_1^{(k_1)}, \beta_H^{(k_2)})$ provided $0\leq k_2<k_1\leq \bar{k}$ and $k_2$ satisfies condition ($\mathbb{H}$) defined by \eqref{condition}.
\end{theorem}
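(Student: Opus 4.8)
The plan is to verify the three defining requirements of a $(k_1,k_2)$-mode Turing–Hopf bifurcation directly from the definition in \cite{jiang2020formulation}, assembling the pieces already proven in Theorem \ref{k-mode Turing theorem} and Theorem \ref{kHopf}. At the parameter value $(r_1,\beta)=(r_1^{(k_1)},\beta_H^{(k_2)})$ we must show: (i) the matrix $J_{k_1}$ has a simple zero eigenvalue that crosses the imaginary axis transversally as $r_1$ varies; (ii) the matrix $J_{k_2}$ has a pair of purely imaginary eigenvalues that cross the imaginary axis transversally as $\beta$ varies; and (iii) the spectral gap condition — every other eigenvalue of every $J_n$, $n\in\mathbb{Z}^+\cup\{0\}$, stays strictly off the imaginary axis. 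Since $r_1$ only affects the Turing side and $\beta$ only needs to be checked for the Hopf transversality, the two codimension-one conditions are handled by quoting the computations $\frac{d\mathcal{D}_{k_1}}{dr_1}\big|_{r_1=r_1^{(k_1)}}<0$ from the proof of Theorem \ref{k-mode Turing theorem}(3) and $\frac{d}{d\beta}\mathrm{Re}\,\lambda_{1,2}\big|_{\beta=\beta_H^{(k_2)}}=\tfrac12\mathcal{T}'_{k_2}(\beta_H^{(k_2)})\neq 0$ from the proof of Theorem \ref{kHopf}.

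First I would record that, because $r_1^{(i)}\neq r_1^{(j)}$ for distinct $i,j\in\mathscr N$, the assignment $r_1=r_1^{(k_1)}$ forces $\mathcal{D}_{k_1}=0$ while $\mathcal{D}_n\neq 0$ for all $n\in\mathscr N\setminus\{k_1\}$, and $\mathcal{D}_n>0$ automatically for $n\notin\mathscr N$ (these are the modes with $r_1^{(n)}\leq 0$ or undefined, where the earlier sign analysis of $\mathcal{D}_k$ gives positivity). Together with $\mathcal{T}_{k_1}<0$ — which holds since $\mathcal{T}_{k_1}=\mathcal{T}_0-(r_1+r_2)k_1^2/\ell^2<\mathcal{T}_0<0$ under the stability hypothesis of Theorem \ref{Tr(J)<0} — the matrix $J_{k_1}$ has exactly one zero eigenvalue and one strictly negative one. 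Next I would treat the Hopf mode: at $\beta=\beta_H^{(k_2)}$ we have $\mathcal{T}_{k_2}=0$ and the hypothesis $\mathcal{D}_{k_2}>0$ guarantees the pair is purely imaginary and nonzero, and for $n\neq k_2$ the identity $\mathcal{T}_n=\mathcal{T}_{k_2}+(r_1+r_2)(k_2^2-n^2)/\ell^2\neq 0$ shows no other $J_n$ has a zero-trace (purely imaginary) pair. The condition $(\mathbb H)$ in \eqref{condition} is precisely what is needed to rule out the only remaining degenerate possibility — that some $J_n$ with $n\neq k_1,k_2$ simultaneously has a zero eigenvalue (i.e. $\mathcal{D}_n=0$) at this same parameter pair, or that the Hopf frequencies resonate in a forbidden way — so I would invoke it to close the spectral gap requirement, noting $0\leq k_2<k_1\leq\bar k$ keeps both critical wavenumbers in the admissible range.

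Combining these, at $(r_1,\beta)=(r_1^{(k_1)},\beta_H^{(k_2)})$ the linearized operator $\mathcal L$ at $E_2$ has spectrum consisting of exactly one simple zero eigenvalue (from $J_{k_1}$), one conjugate pair $\pm i\omega$ with $\omega\neq 0$ (from $J_{k_2}$), and all remaining eigenvalues with strictly negative real part; moreover the two transversality derivatives computed above are nonzero with respect to the two unfolding parameters $r_1$ and $\beta$. This is exactly the definition of a $(k_1,k_2)$-mode Turing–Hopf bifurcation in \cite{jiang2020formulation}, so the theorem follows. The main obstacle I anticipate is not any single estimate but the bookkeeping in step (iii): one must be careful that condition $(\mathbb H)$ genuinely excludes \emph{every} spurious near-critical eigenvalue across \emph{all} modes $n$ — including large $n$, where $\mathcal{D}_n\to+\infty$ and $\mathcal{T}_n\to-\infty$ make positivity automatic, and the finitely many small $n$, which must be ruled out one at a time using the strict-inequality and non-degeneracy content of $(\mathbb H)$. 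Once the finiteness reduction is made explicit (only $n\le\bar k$ can have $\mathcal{D}_n\le 0$, and only a bounded set of $n$ can have $\mathcal{T}_n$ near $0$ for $\beta$ near $\beta_H^{(k_2)}$), the verification is a finite check and the argument closes.
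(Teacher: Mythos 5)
Your outline assembles the right transversality ingredients, but it misses the actual content of condition $(\mathbb{H})$, and that is the crux of the paper's proof. In Theorem \ref{Turing-Hopf} the positivity $\mathcal{D}_{k_2}>0$ at $(r_1,\beta)=(r_1^{(k_1)},\beta_H^{(k_2)})$ is \emph{not} a hypothesis (it was hypothesized only in Theorem \ref{kHopf}); it is exactly what $(\mathbb{H})$ has to deliver. At $r_1=r_1^{(k_1)}$ the quantity $\mathcal{D}_k$, viewed as a function of a continuous wavenumber $k$, vanishes at $k=k_1$ and at the complementary root $k=k^\ast$, and is negative precisely between these two roots; since $\mathcal{T}_{k_2}=0$, the mode $k_2$ produces a purely imaginary pair only if $k_2$ lies outside that interval, otherwise $J_{k_2}$ has two real eigenvalues of opposite sign and no Hopf bifurcation occurs at that mode. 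The alternatives $\mathbb{H}_1:\ k_2<k_1\le\breve{k}\le\lfloor k^\ast\rfloor$ and $\mathbb{H}_2:\ k_2\le\lfloor k^\ast\rfloor\le\breve{k}\le k_1$ are exactly the two geometric configurations (according to which side of the minimizing mode $\breve{k}$ of $r_1^{(k)}$ the Turing mode $k_1$ sits, equivalently the sign of $\frac{d\mathcal{D}_k}{dk}$ at $k_1$) that place $k_2$ outside the negativity interval and hence force $\mathcal{D}_{k_2}>0$. Your proposal instead declares $\mathcal{D}_{k_2}>0$ ``the hypothesis'' and reassigns $(\mathbb{H})$ to excluding zero eigenvalues at other modes and ``forbidden resonances''; but zero eigenvalues at modes $n\neq k_1$ are already excluded by $r_1^{(i)}\neq r_1^{(j)}$ (together with $\mathcal{D}_n>0$ for $n=0$ and $n>\bar{k}$), and no resonance condition enters at the linear level. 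So the central step — deriving $\mathcal{D}_{k_2}>0$ from $(\mathbb{H})$ — is missing from your argument.

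Two further statements would not survive scrutiny. First, your justification $\mathcal{T}_{k_1}<\mathcal{T}_0<0$ ``under the stability hypothesis of Theorem \ref{Tr(J)<0}'' is inconsistent with the setting: at $\beta=\beta_H^{(k_2)}$ one has $\mathcal{T}_0=\mathcal{T}_{k_2}+(r_1+r_2)k_2^2/\ell^2=(r_1+r_2)k_2^2/\ell^2\ge 0$, with strict inequality when $k_2\ge 1$, so $\mathcal{T}_0<0$ fails there. The correct argument, and the paper's, is that $\mathcal{T}_k$ is strictly decreasing in $k$, so $\mathcal{T}_{k_2}=0$ and $k_1>k_2$ give $\mathcal{T}_{k_1}<0$; this is precisely why the ordering $0\le k_2<k_1\le\bar{k}$ appears in the statement. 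Second, your concluding claim that all remaining eigenvalues have strictly negative real part is neither proved nor true in general: when $k_2\ge 1$ every mode $0\le n<k_2$ has $\mathcal{T}_n>0$, and under $\mathbb{H}_1$ with $k_1<\breve{k}$ one even has $\mathcal{D}_{\breve{k}}<0$; what the proof establishes, and what is used, is only that no mode other than $k_1$ and $k_2$ contributes eigenvalues on the imaginary axis. With the sign analysis of $\mathcal{D}_k$ restored as the source of $\mathcal{D}_{k_2}>0$ and these two points corrected, your outline would match the paper's proof.
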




\begin{proof}
To obtain Turing instability, the trace of $J_{k} (k\in\mathscr{N})$  should remain negative when its determinant is set equal to zero. Since $\mathcal{T}_k$ is decreasing in $k$ and the Hopf bifurcation occurs when the trace is equal to zero, it is necessary to have $0\leq k_2<k_1\leq \bar{k}$.

By implementing the argument of Theorem $\ref{k-mode Turing theorem},$ we have $\mathcal{D}_{k_1}=0$ for any $k_1\leq \bar{k}$, $r_1=r_1^{(k_1)}\geq r_1^{\breve{(k)}}$ and $\delta_4\ell^2>r_2$. Hence,  $J_{n}$ has no zero eigenvalue for all $n\in\mathbb{Z}^+\cup \{0\}\setminus\{k\}$. Due to the $U$-shape of function $r_1=r_1^{(k)}$, $\mathcal{D}_k$ changes signs from positive to negative then to positive as $k$ increases throughout the interval $(0, \infty)$, which occurs at $k=k_1\in\mathbb{Z}^+$, and at the complementary root $k=k^\ast\in\mathbb{R}^+$, where
$$k^\ast=\frac{\sqrt{r_2\left(\delta_1\delta_4\ell^{2}-\delta_1k_1^{2}r_2-\delta_2\delta_3\ell^{2}\right)\left(\delta_1\delta_4^{2}\ell^{2}-\delta_1\delta_4k_1^{2}r_2-\delta_2\delta_3\delta_4\ell^{2}+\delta_2\delta_3k_1^{2}r_2\right)}\ell}{r_2\left(\delta_1 \delta_4\ell^{2}-\delta_1k_1^{2}r_2-\delta_2\delta_3\ell^{2}\right)}.$$


For $\beta=\beta_H^{(k_2)}$, it follows from $\mathcal{T}_{k_{2}}(\beta_H^{(k_2)})=0$ and $\mathcal{T}'_{k_{2}}(\beta_H^{(k_2)})\neq 0$ that $J_{k_2}$ exhibits a pair of pure imaginary eigenvalues satisfying the transversality condition provided $\mathcal{D}_{k_2}$ is strictly positive, which is feasible if one of the following conditions holds:
\begin{equation}
   \mathbb{H}: \begin{cases}
     \mathbb{H}_1: k_2< k_1\leq\breve{k}\leq\lfloor k^*\rfloor,\\
  \mathbb{H}_2: k_2 \leq \lfloor k^*\rfloor\leq\breve{k}\leq k_1.
\end{cases}\label{condition}
\end{equation}
If ($\mathbb{H}_1$) holds, we have $\mathcal{D}_{k_1}=0$, $\mathcal{D}_{k_2}>0$, and $\frac{d\mathcal{D}_k}{dk}|_{k=k_1,r=r_1^{(k_1)}}<0$. If ($\mathbb{H}_2$) holds, we have $\mathcal{D}_{k_1}=0$, $\mathcal{D}_{k_2}>0$, and $\frac{d\mathcal{D}_k}{dk}|_{k=k_1,r=r_1^{(k_1)}}>0$.  The condition ($\mathbb{H}$) is sufficient for the positiveness of $\mathcal{D}_{k_2}$, and consequently the existence of the pure imaginary conjugate eigenvalues. This finishes the proof.
\end{proof}

    \begin{figure}[H]
    \centering
    \includegraphics[width=0.6\linewidth]{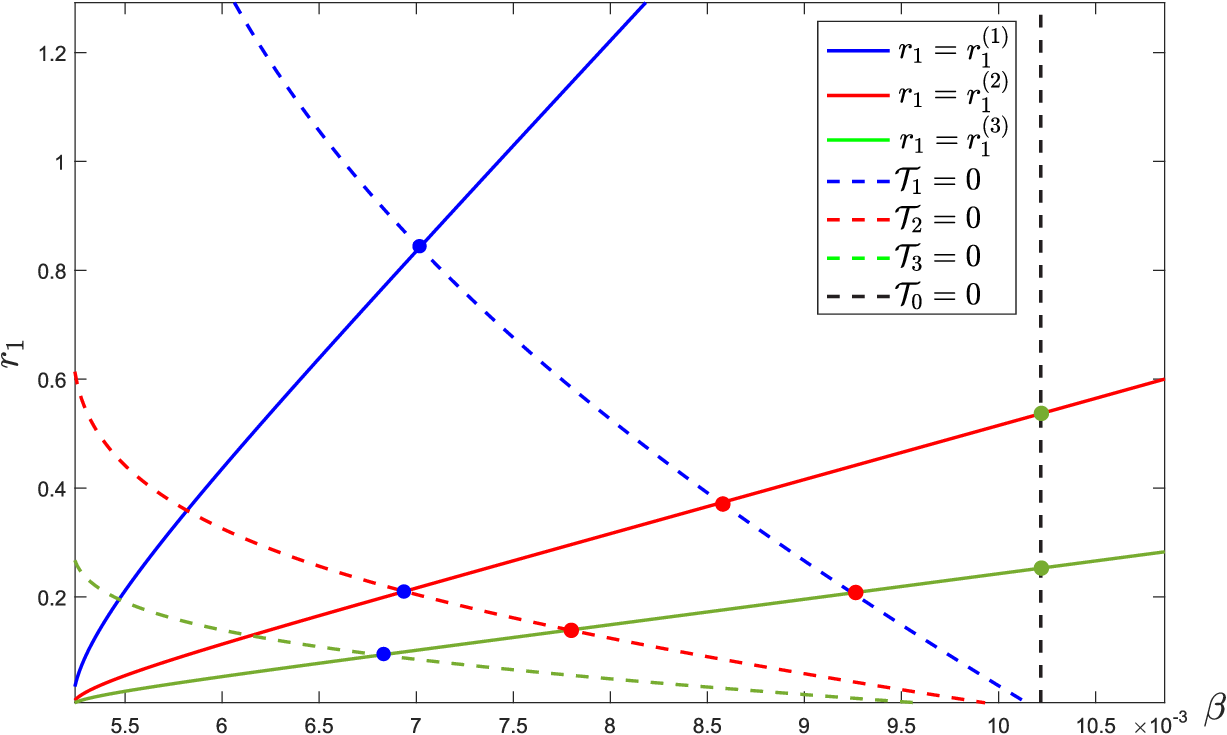}
    \caption{ \footnotesize   The solid lines represent the $k$-mode Turing bifurcation curves $r_1=r_1^{(k)}$, while the dashed lines are the curves of $\mathcal{T}_k(r_1,\beta)=0$. The red points mark the parameter values at which $(k_1,k_2)$-mode Turing-Hopf bifurcation occurs ($k_2=1, 2$ and $k_2<k_1$). Blue points indicate where both $\mathcal{T}_k$ and $\mathcal{D}_k$ are simultaneously zero for the same $k$-mode ($k=1, 2, 3$), resulting in a double zero bifurcation \cite{CJ2020}. The vertical line denotes the $0$-mode Hopf bifurcation. The green points indicate the parameters for which system \eqref{RD model} undergoes a $(k,0)$-mode Turing-Hopf bifurcation ($k=2, 3$). For the sake of the readability of the graph, we limited $k$ to 3.}
    \label{fig: r1 vs beta}
\end{figure}

\noindent {\bf Example 3}. Choose $d=0.01$, $\mu_0=0.1$, $\mu_1=10$, $b=0.03$,  $A=1$ and $r_2=0.01$.

(1) The Turing-Hopf bifurcation points can be observed in Fig. \ref{fig: r1 vs beta}, where the red and green points mark the values at which the $(k_1, k_2)$-mode Turing-Hopf bifurcation occurs. Particularly the $(4, 3)$-mode Turing-Hopf bifurcation occurs at we have
$(r_1^{(4)}, \beta_H^{(3)})\approx (0.0721, 0.0073)$.

(2) We choose $(r_1, \beta)=(0.07208, 0.00734)$ sufficiently closed to $(r_1^{(4)}, \beta_H^{(3)})$, and sketch the solution $S(x, t)$ and $I(x, t)$, which exhibit a spatiotemporal pattern near the $(4, 3)$-mode Turing-Hopf bifurcation point. See Fig. \ref{Turing-Hopf figure} for the surfaces of $S(x, t)$ and $I(x,t)$, and Fig. \ref{Turing-Hopf-Projection} for the heatmap.

\begin{figure}[H]
    \centering
    \subfigure[Surface $S(x, t)$]{%
         \includegraphics[width=0.45\linewidth]{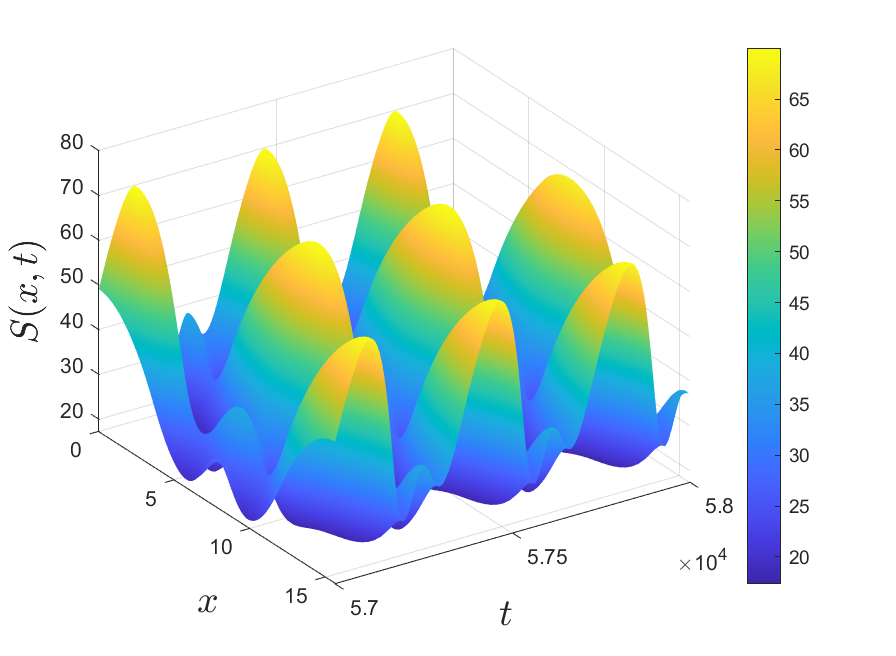}%
    }
    \hfill
    \subfigure[Surface $I(I, t)$]{
         \includegraphics[width=0.45\linewidth]{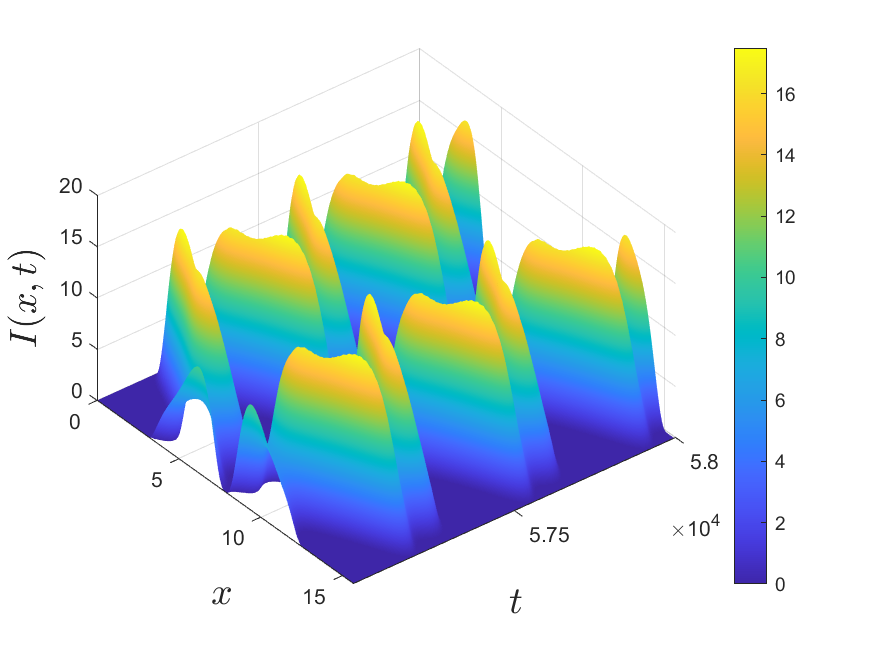}%
    }
    \caption{\footnotesize The long-term spatiotemporal dynamics of the susceptible and infected populations on the domain $x\in[0,5\pi]$ near the $(4,3)$-mode Turing-Hopf bifurcation point. Figures (a) and (b) resemble the 3D surfaces of $S(x,t)$ and $I(x,t)$. The susceptible and infected populations oscillate in both space and time, forming peaks across the domain that repeat periodically with a fixed period. Regional pulses with amplitudes ranging from 0 to roughly 20 that follow the troughs of susceptible population $(S)$ are visible in the infected population $(I)$. }
    \label{Turing-Hopf figure}
\end{figure}

\begin{figure}[H]
    \centering
 \subfigure[Projection of $S(x, t)$]{%
         \includegraphics[width=0.45\linewidth]{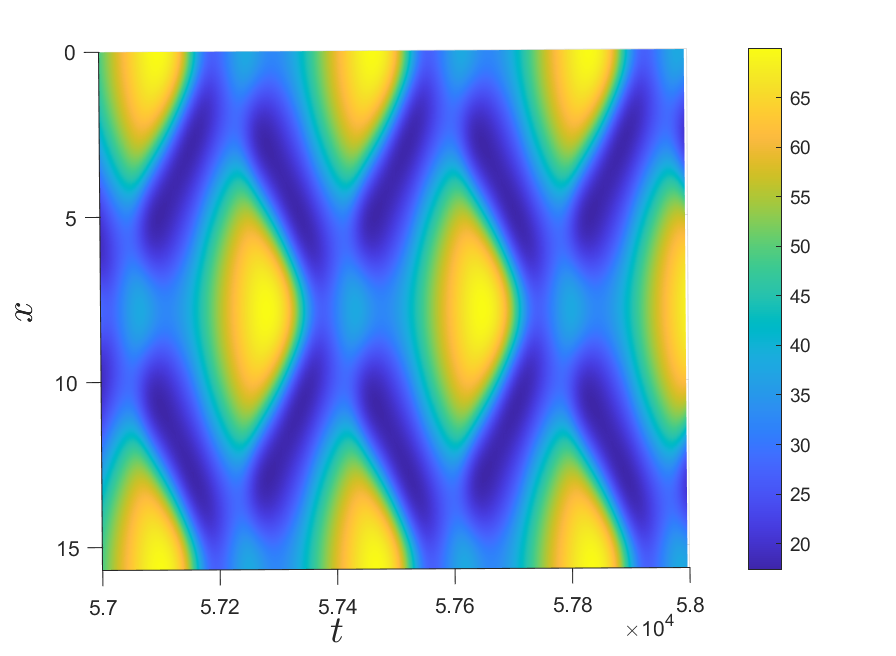}%
    }
    \hfill
    \subfigure[Projection of $I(x, t)$]{
         \includegraphics[width=0.45\linewidth]{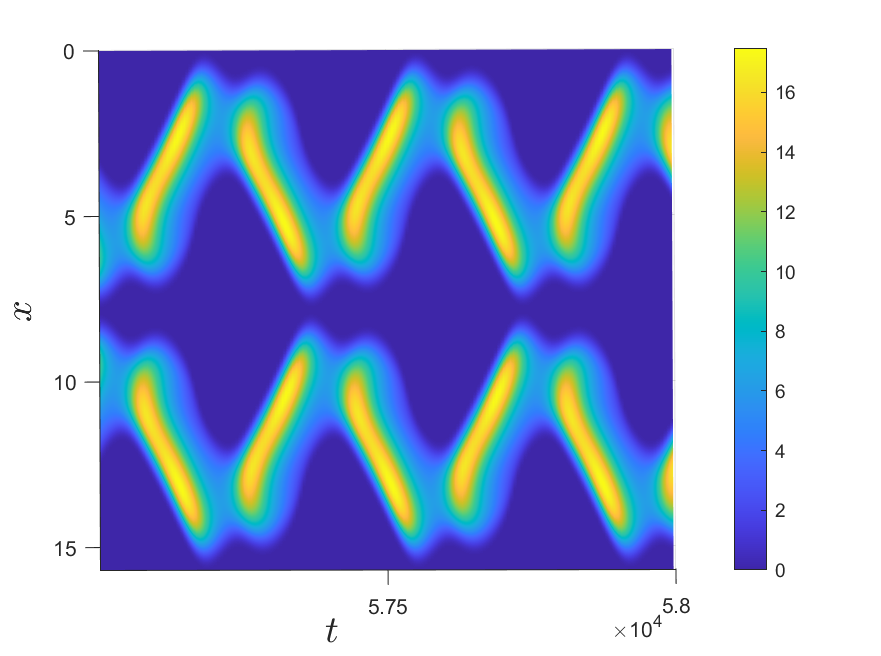}%
      }
    \caption{\footnotesize Projection of the spatiotemporal pattern of Fig. \ref{Turing-Hopf figure} onto $(x, t)$-plane. The heatmap illustrates the spatial pattern and temporal oscillation: cool (blue) regions correspond to low $(S)$ and $(I)$ while warm (yellow) regions correspond to high $(S)$ and $(I)$. The infected peaks show up as bright regions, suggesting that a wave-like pattern of infection outbreak travels through space.}
    \label{Turing-Hopf-Projection}
\end{figure}


\section{Conclusion}

Based on the theoretical analysis on the reaction kinetics and pattern formation of the diffusive model \eqref{RD model} in Sections 2 and 3, there are several aspects we would like to address as follows.

{\bf (a) Activator and inhibitor of pattern formation}. We have proven that the diffusion of the susceptible and infected classes will not change the stability of the constant steady states $E_0$ and $E_1$ (see Theorem \ref{T31} and Remark \ref{R32}). Hence, the spatial pattern of the reaction-diffusion system driven by diffusion occurs only when the constant steady state $E_2$ loses its stability. We see that the faster movement of the susceptible class will induce the spatial and spatiotemporal patterns, which are characterized by $k$-mode Turing instability. Moreover, the infected population functions as an activator, while the susceptible population serves as an inhibitor (see Theorem \ref{kHopf} and Theorem \ref{Turing-Hopf}).

To illustrate the idea, we sketch the Turing bifurcation curve in the $(r_1, r_2)$-plane. See Fig. \ref{r1r2graph}. The lines $r_1=r_1^{(k)}$ denote the curves of $k$-mode Turing bifurcation. One can see that
all Turing bifurcation curves lie above the dashed blue line  $r_1=r_2/\gamma_{-}$, where $\gamma_{-}$ signifies the threshold ratio for the diffusion rates between the infected class and susceptible class. The Turing bifurcation occurs only when the diffusion rate $r_1$ of the susceptible class exceeds $r_2/\gamma_{-}$. The black dots indicate the intersections between each pair of curves of Turing bifurcation, corresponding to Turing–Turing bifurcation points \cite{CJ2022, ZW2022}. The occurrence of Turing–Turing bifurcation implies the superposition of the two spatial patterns.

\begin{figure}[!ht]
    \centering
    \includegraphics[width=0.65\linewidth]{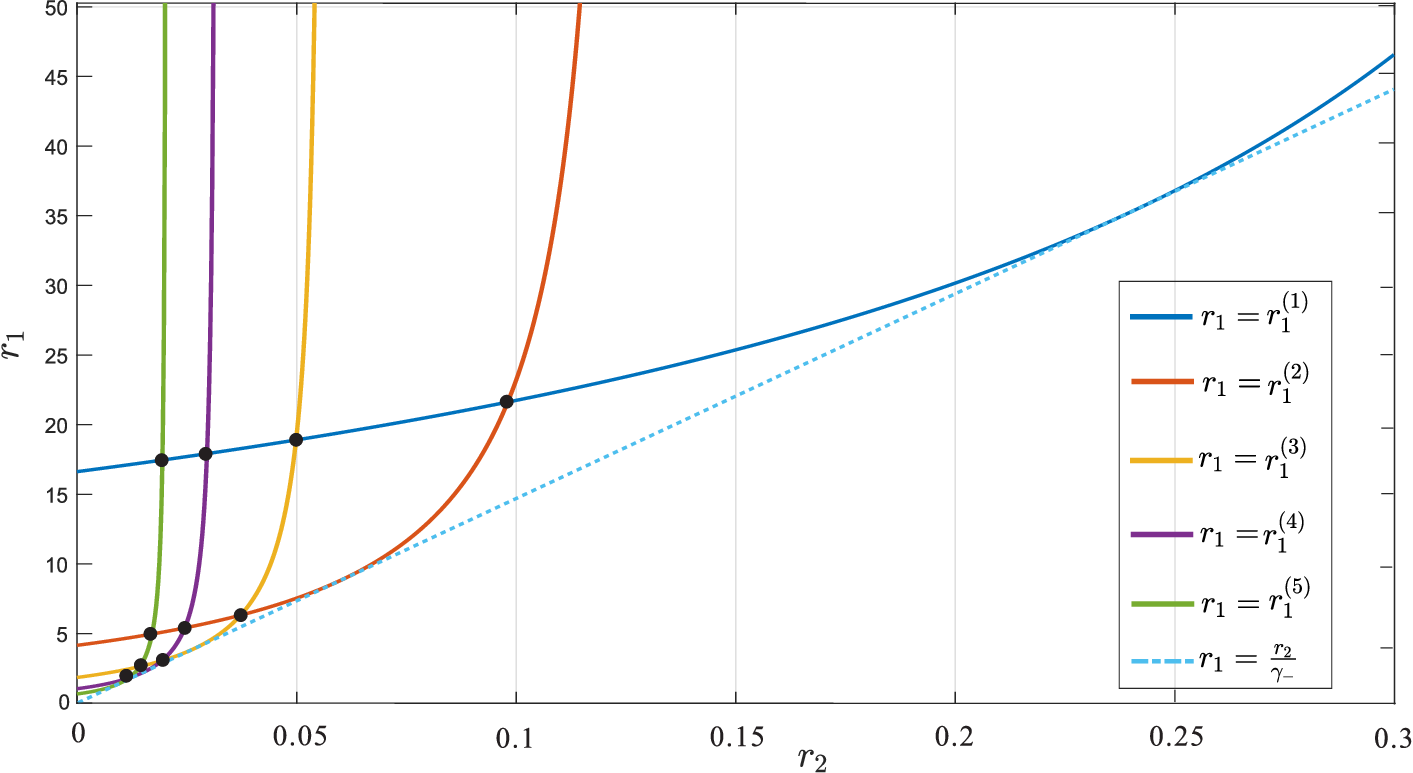}
    \caption{\footnotesize Curves of $k$-mode Turing bifurcation $r_1=_1^{(k)}$ lie above and are tangent to the line $r_1=r_2/\gamma_-$ in $(r_1, r_2)$ plane. The black dots indicate Turing–Turing bifurcation points. Here, all parameters are the same as those for Fig. \ref{Fig: r1vsk} except the free diffusion rates $r_1$ and $r_2$.}
    \label{r1r2graph}
\end{figure}

{\bf (b) Two types of temporal oscillations}. The periodic solutions merging from 0-mode Hopf bifurcation are spatially homogeneous, while the periodic solutions merging from a $k$-mode Hopf bifurcation are spatially inhomogeneous if $k\in\mathbb{Z}^+$, which characterizes the scenario that infected and susceptible populations oscillate over time and vary from place to place in a non-uniform but regular way.

{\bf (c) Transient dynamics from temporal oscillatory regime to a spatial periodic pattern}. It is clearly shown in Fig. \ref{fig: r1 vs beta} that there are several $(k_1, k_2)$-mode Turing-Hopf bifurcation points and double-zero bifurcation points as $(r_1, \beta)$ varies. Hence, the spatiotemporal dynamics are sensitive to bifurcation parameters $(\beta, r_1)$ and the initial condition. We fix $(\beta, r_1)=(0.0094, 0.07)$, and plot the surfaces and the heatmap of the solution $S(x, t)$ and $I(x, t)$ in Fig. \ref{surface_transient} and Fig. \ref{projection_transient}, which suggest a dynamical transition occurring near $t\approx 2800$ beyond which spatial structure becomes pronounced. From an epidemiological perspective, these figures illustrate the dynamic evolution of an epidemic in a spatial domain, highlighting a critical qualitative shift in how the disease spreads:

{\it Early Stage ($t<2800$)}: The disease dynamics are temporally periodic and spatially homogeneous. This corresponds to a situation where the entire region experiences synchronous epidemic cycles, such as recurrent waves of infection due to seasonality or behavioral cycles, but without spatial differentiation.

{\it Post-transition Stage ($t>2800$)}: The system undergoes a transition from temporal-only oscillations to spatiotemporal pattern formation. In practical terms, it means the disease begins to localize in space, forming hotspots or periodic waves of infection across the domain. 

\begin{figure}[H]
    \centering
 \subfigure[Surface of $S(x, t)$]{%
         \includegraphics[width=0.45\linewidth]{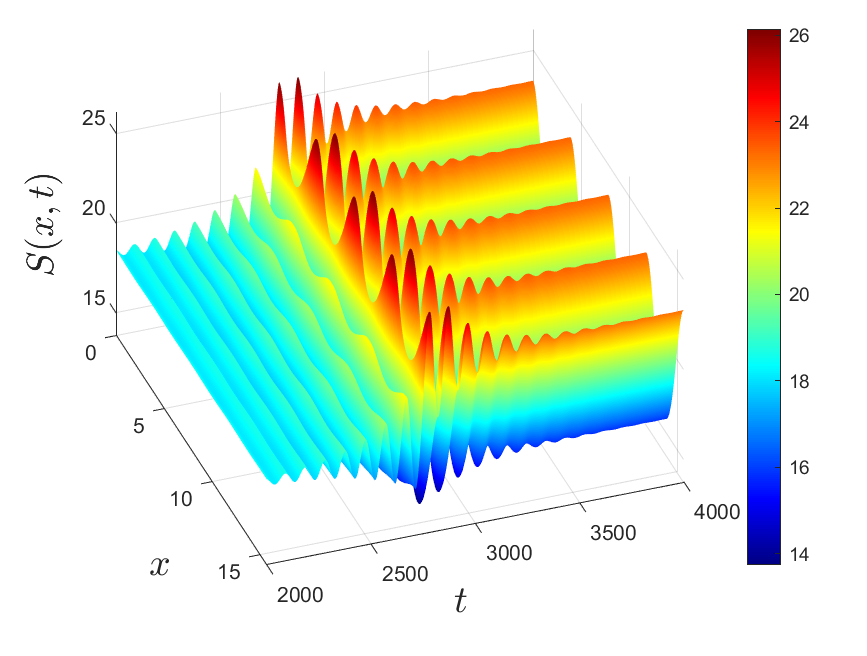}%
    }
    \hfill
    \subfigure[Surface of $I(x, t)$]{
         \includegraphics[width=0.45\linewidth]{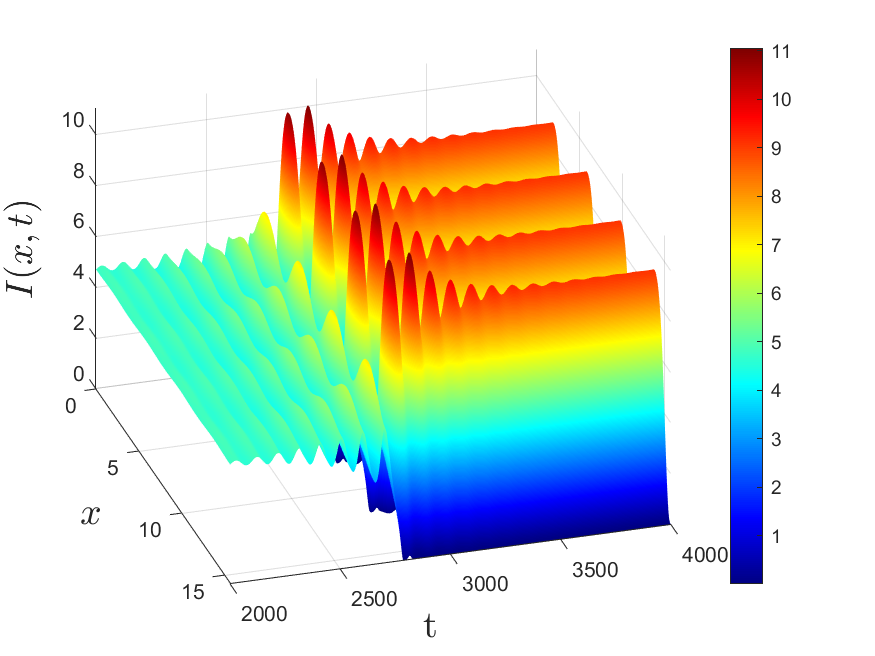}%
      }
    \caption{\footnotesize Transient dynamics from temporal oscillatory regime to a spatially periodic pattern.  The structure grows in amplitude, clearly indicating a transition from a homogeneous oscillatory state to a spatially patterned wave.}
    \label{surface_transient}
\end{figure}

\begin{figure}[H]
    \centering
 \subfigure[Projection of $S(x, t)$]{%
         \includegraphics[width=0.45\linewidth]{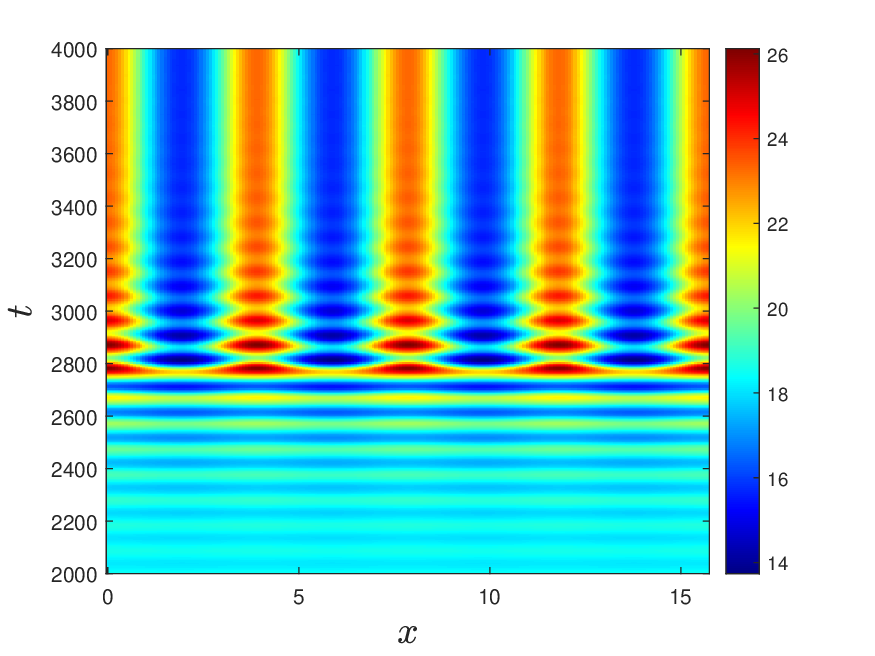}%
    }
    \hfill
    \subfigure[Projection of $I(x, t)$]{
         \includegraphics[width=0.45\linewidth]{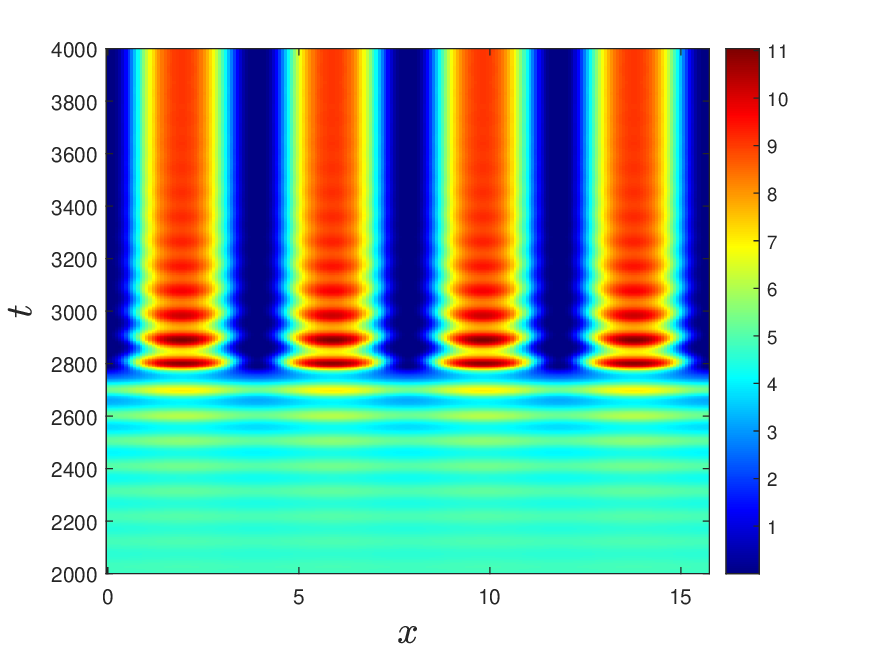}%
      }
    \caption{\footnotesize The heatmap displays a transition from a homogeneous strip pattern at early times to regular vertical stripes at later times, indicating that the solution becomes spatially periodic from temporal oscillation. }
    \label{projection_transient}
\end{figure}

{\bf (d) Public health implication}. The Turing pattern, spatiotemporal pattern, and transient dynamics suggest that uniform interventions (e.g., vaccination or control policies applied evenly across space) may not be effective. Instead, spatially targeted strategies are necessary to contain disease waves that vary regionally and cyclically.

In summary, we have investigated a diffusive epidemic model with an infection-dependent recovery rate and analyzed its pattern formation behavior using bifurcation theory and numerical simulations. The model exhibits rich spatiotemporal dynamics, including multiple steady states, spatially homogeneous periodic oscillations, and the emergence of spatial and spatiotemporal patterns induced by Turing and Turing–Hopf bifurcations. From an epidemiological perspective, these patterns explain localized hotspots of infection and asynchronous disease recurrence across spatial regions. These findings underscore the importance of incorporating spatial heterogeneity and nonlinear recovery mechanisms into epidemic models to better understand real-world disease dynamics. Such insights can inform more effective, spatially targeted intervention strategies in the control of infectious diseases.

\vskip 0.5cm

\section*{Appendix.}

{\it The proof of Lemma \ref{root}.} By straightforward calculation, we have
$$\Psi'(I)=3\beta I^2+2(2b\beta+d)I+(b^2\beta+2bd-b\mu_1+b\mu_0).$$

(1). The discriminant of $\Psi'(I)$ is $4(b\beta-d)^2+12b\beta(\mu_1-\mu_0)$, which is positive.

(2). If $\bar{\omega}<b\beta+2d$, then $\Psi'(0)>0$. Hence, $\Psi'(I)$ has two negative roots. Since $\Psi(0)>0$, $\Psi(I)$ does not have positive roots, so $\Psi(I)>0$ for all $I>0$.

(3). If $\bar{\omega}=b\beta+2d$, then $\Psi'(0)=0$. Hence, $\Psi'(I)$ has a zero root and a negative root. Since $\Psi(0)>0$, $\Psi(I)$ does not have positive roots, so $\Psi(I)>0$ for all $I>0$.

(4). If $\bar{\omega}>b\beta+2d$, then $\Psi'(0)<0$. Hence, $\Psi'(I)$ has a positive root and a negative root. The discriminant of $\Psi(I)$ is
$$\tilde{\Delta}(\bar{\omega})=4b\beta\bar{\omega}^2+(d^2-20db\beta-8b^2\beta^2)\bar{\omega}+4(b\beta-d)^3.$$
Regard $\tilde{\Delta}(\bar{\omega})$ as a function of $\bar{\omega}$. Note that $\tilde{\Delta}=-d(2d^2+11db\beta+32b^2\beta^2)<0$ if $\bar{\omega}=b\beta+2d$, so $\tilde{\Delta}(\bar{\omega})$ has two roots, where one root is less than $b\beta+2d$ and the other root is greater than $b\beta+2d$, which is $\omega$.

If $\bar{\omega}\in(b\beta+2d, \omega)$, then $\tilde{\Delta}<0$,  so $\Psi(I)$ does not have positive roots, so $\Psi(I)>0$ for all $I>0$. If $\bar{\omega}=\omega$, then $\tilde{\Delta}=0$,  so $\Psi(I)$ has a double root $\tilde{I}>0$, and $\Psi(I)>0$ for $I>0$ and $I\in(0, \tilde{I})\cup(\tilde{I}, \infty)$. If $\bar{\omega}\in(\omega, \infty)$, then $\tilde{\Delta}>0$,  so $\Psi(I)$ has distinct positive roots $\check{I}$ and $\hat{I}$, and $\Psi(I)>0$ for $I\in (0, \check{I})\cup(\hat{I}, \infty)$ and $\Psi(I)<0$ for $I\in(\check{I}, \hat{I})$.




\vskip 0.3cm

\noindent {\bf Data Availability} No datasets were generated or analyzed during the current study.






\end{document}